\def\N{\mathbb{N}}
\newtheorem{theorem}{Theorem}[section]
\newtheorem{lemma}[theorem]{Lemma}
\newtheorem{remark}[theorem]{Remark}
\newtheorem{definition}[theorem]{Definition}
\title{Infinite intersections of doubling measures, weights, and function classes}
\author{Theresa C. Anderson, David Phillips, Anastasiia Rudenko, Kevin You}
\date{September 26, 2024}
\begin{document}
\maketitle
\author
\begin{abstract}
    A series of longstanding questions in harmonic analysis ask if the intersection of all prime ``$p$-adic versions" of an object, such as a doubling measure, or a Muckenhoupt or reverse H\"older weight, recovers the full object. Investigation into these questions was reinvigorated in 2019 by work of Boylan-Mills-Ward, culminating in showing that this recovery fails for a finite intersection in work of Anderson-Bellah-Markman-Pollard-Zeitlin.  Via generalizing a new number-theoretic construction therein, we answer these questions.
\end{abstract}
\section{Introduction}
Dyadic techniques permeate much of mathematics, and in particular arise in analysis and number theory.  Restricting one's attention to numbers or intervals involving powers of $2$ can provide a key simplification to proving many theorems.  In particular, proving theorems for objects such as maximal functions, Muckenhoupt weights, and reverse H\"older weights whose definitions involve all intervals is more difficult than the corresponding dyadic versions.  Unfortunately, it is a classical result that dyadic versions of these objects are not equivalent to the full versions.  A longstanding folkloric set of questions asks if one intersects dyadic, triadic, and all prime $p$-adic (for any prime $p$) objects (where $p$-adic means that the intervals in the definition are restricted to lie in the $p$-adic grid - see Section 2 for precise definitions), does one recover the full object class?  Since doubling measures are so critical to analysis (underlying metric measure spaces, for example), we explicitly state this question for these: is
\[
\bigcap_{prime} Db_p = Db,
\]
where $Db_p$ indicates $p$-adic doubling measures and $Db$ doubling measures?  

Though many complimentary results have appeared (see, for example, \cite{Ward}), the authors are aware of little progress toward this question prior to 2019, outside of unpublished work of Peter Jones discussed in \cite{Krantz}.  Then in 2019, Boylan-Mills-Ward made substantial progress by showing that dyadic and triadic doubling do not imply doubling (hence, at a minimum, more primes would be needed to make the above claim true).  Precise definitions of all of these concepts appear in Section 2.  Their work \cite{BMW} involved careful case analysis, and a significant dose of elementary number theory highly specific to the \emph{bases} $2$ and $3$ that did not generalize easily.  Inspired by their progress, Anderson and Hu expanded their result to any two primes $p$ and $q$.  This involved several significant new steps, including a much heavier interplay of number theoretic methods.  Indeed, their progress therein allowed them to disprove a number theoretic conjecture of Krantz \cite{Krantz}.  Additionally, they were able to apply their results to answer corresponding questions with regard to $A_p$ and reverse H\"older weights \cite{AH}.  Anderson-Travesset-Veltri generalized this result to coprime bases and extended applications to the function class $BMO$ \cite{ATV}.  Finally, with entirely new techniques, the authors of \cite{ABMPZ} extended all claims for measures, weight and function classes to a finite set of bases, that is, that the intersection of all $n$-adic objects, for a finite set of $n\in \mathbb{N}$, is never equal to the full object.  This involved a completely new construction technique, relying on an interplay of topology, number theory, analysis and geometry that has deep connections with unsolved conjectures, such as Schanuel's conjecture \cite{Sch}.  Inspired by this new construction, we are able to fully answer this folkloric conjecture.

Firstly, we show that when one considers the whole real line, one can construct a measure that is $n$-adic doubling for all $n\in\mathbb{N}\setminus \{1\}$, but that is not doubling overall.  This indicates why previous work focused on measures defined on the torus $\mathbb{T} = [0,1]\setminus \{0\equiv1\}$ and alludes to some of the difficulties in this restriction (notably, the lack of an ``origin").  Then we are able to push the technical limits of the construction from \cite{ABMPZ} to show our first main result:

    \begin{theorem}\label{main}
        There exists an infinite family of measures
        on $\mathbb{R}$ and on $\mathbb{T}$
        that are $n$-adic doubling for each $n \geq 2$, yet not
        doubling overall. 
    \end{theorem}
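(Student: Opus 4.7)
The plan is to push the construction of \cite{ABMPZ} from a finite base set to the full family $\{n \geq 2\}$. That construction concentrates the failure of overall doubling onto intervals whose endpoints sit in sufficiently generic position with respect to each $n$-adic grid in a finite collection, and then verifies $n$-adic doubling through a finite case analysis per base. To handle every $n \geq 2$ simultaneously, I expect the real-line and torus constructions to look genuinely different, and the torus is where the real difficulty concentrates.

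For $\mathbb{R}$ I would exploit non-compactness by spreading the obstruction. Fix a fast-growing sequence $N_2 < N_3 < \cdots$ and, for each $n \geq 2$, place a rescaled single-base-$n$-targeted ``bump'' near $N_n$. Each bump is designed so that it by itself destroys overall doubling via a carefully chosen pair of adjacent intervals off every $k$-adic grid, while remaining $k$-adic-constant at its own scale for every $k$; this is possible because only finitely many grids need to be avoided at the scale of a single bump, and the one-location freedom suffices. The bumps have pairwise disjoint supports, so only finitely many are ``active'' near any given point at any given scale, and the $n$-adic doubling check at that location reduces to the finite-base ABMPZ framework. Overall doubling fails because each individual bump already violates it, so taking the union of all bumps plus Lebesgue measure yields the desired example on $\mathbb{R}$.

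For $\mathbb{T}$ compactness forecloses spreading, so I would concentrate the failure at a single transcendental point $x_0 \in \mathbb{T}$ that is ``simultaneously generic'' against every $n$-adic grid. Around $x_0$ I would build an infinite sum of localized perturbations at scales $\lambda_k \downarrow 0$, with locations chosen so that, for each fixed $n$, only perturbations at scales comparable to $n^{-j}$ interact with the $n$-adic grid near $x_0$, while finer-scale perturbations average harmlessly inside a single $n$-adic cell; summability of the perturbation weights keeps the measure finite. The existence of such an $x_0$ and placement scheme should come from a diagonal or Liouville-style construction fed by the algebraic-independence input of \cite{ABMPZ} (and its Schanuel connection), which provides the quantitative independence needed across bases. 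The main obstacle is precisely this torus case: in the finite-base construction one satisfies finitely many transcendence conditions and controls all $n$-adic doubling constants uniformly, whereas here infinitely many conditions must be satisfied simultaneously and the $n$-adic doubling constant may a priori diverge as $n \to \infty$. The central technical task will be interleaving the scales $\lambda_k$ with the choice of $x_0$ so that the $n$-adic doubling constant is finite for each individual $n$, while the failure of overall doubling is preserved at $x_0$ at the correct ``off-grid'' scales.
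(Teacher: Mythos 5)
Your instinct that compactness forces a concentration rather than a spreading construction for $\mathbb{T}$ is correct (the paper indeed has an intermediate result, Theorem~\ref{thm2}, that spreads the obstruction over $(0,\infty)$ before the final nested construction), and the idea of an infinite sum of localized perturbations at decreasing scales with containment controlling which perturbations a given $n$-adic interval can see is genuinely the right shape. But two key technical points in your sketch are wrong, and the second one goes in the opposite direction from what actually works.

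First, for the $\mathbb{R}$ proposal: a bump that ``destroys overall doubling\ldots while remaining $k$-adic-constant at its own scale for every $k$'' cannot exist. If the bump's $k$-adic doubling constant were uniform in $k$ (in the strong sense you describe, it sounds like constant $1$), then by the paper's Theorem~\ref{uniform} the bump would be doubling, so it could not contribute to the divergence. The paper's bumps (Theorem~\ref{thm1}) are necessarily $n$-adic doubling with a constant that \emph{grows} like $(2n)^{\log_2\kappa}$, and the whole delicacy of Theorems~\ref{thm2} and~\ref{thm3} is choosing the parameters $\kappa_\eta, \alpha_\eta, M_\eta$ dynamically so that for each fixed $n$, the supremum over all bumps of their $n$-adic constant stays finite while the overall doubling constant diverges. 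This balancing act is the content of the proof and cannot be waved away with ``one-location freedom suffices.''

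Second, and more fundamentally, the ``transcendental $x_0$ simultaneously generic against every $n$-adic grid'' is the wrong accumulation point. The paper concentrates its perturbations near $0$, precisely because $0$ is a common endpoint of \emph{every} $n$-adic grid, not generic against any of them. Each perturbation sits around a \emph{dyadic} point $Z^{(\eta)}=2^{-x}$ chosen (via Lemma~\ref{Lemma4}) to lie within $\epsilon 2^{-x}$ of an $n$-adic point $Y_n$ simultaneously for all $n\le M_\eta$. The mechanism is near-coincidence of distinguished grid points, not avoidance. The $n$-adic doubling verification then runs through the strictly descending chain $\cdots\subset R_n^{(\eta+1)}\subset L_n^{(\eta)}\subset R_n^{(\eta)}\subset[0,1)$ of $n$-adic intervals all sharing the endpoint $0$; no such chain exists around a generic $x_0$, and indeed the motivating example in Section~2 shows that when an accumulation point sits in the interior of intervals (which any generic $x_0$ does for $n$-adic intervals at most scales) the construction becomes essentially trivial on $\mathbb{R}$ and moot on $\mathbb{T}$. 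Finally, the paper's final construction (Theorem~\ref{thm3}) lives on $[0,1)$ and is Lebesgue off a bounded set, so it serves $\mathbb{R}$ and $\mathbb{T}$ uniformly; you do not in fact need genuinely different constructions for the two domains.
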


    Moreover, we carefully investigate the doubling constant
    involved in our construction and show that each $n$-adic doubling constant can be made to grow
    arbitrarily slowly in $n$,  but it cannot be independent of $n$.  We will call a measure uniformly $n$-adic doubling if the $n$-adic doubling constant can be chosen to be universal simultaneously for all $n \in \mathbb{N}\setminus \{1\}$.  It turns out that these measures must be doubling overall, the subject of our next result.  

    \begin{theorem}\label{uniform}
    Let $\mu \neq 0, \infty$ be a measure on $\mathbb{R}$ or on $\mathbb{T}$ that is uniformly $n$-adic doubling for all $n \in \N$.  Then $\mu$ is doubling overall. 
    \end{theorem}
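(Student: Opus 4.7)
My plan is to bound the ordinary doubling constant in terms of the uniform $n$-adic doubling constant $C$, with the key idea that the uniformity in $n$ lets me choose $n$ large depending on $I$, so that $I$ fits cleanly in an $n$-adic grid at the coarsest possible level (the root on $\T$, or a level-$0$ unit interval on $\R$). Then the bound $\mu(3I) \leq C'\mu(I)$, where $3I$ denotes the concentric triple, reduces to a bounded chain of parent-to-child $n$-adic doubling inequalities.

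On $\T$, for $|I| = \ell < 1/3$, I will choose any integer $n \geq 2/\ell$. Then level-$2$ $n$-adic intervals have length $1/n^2 \leq \ell/2$, so $I$ contains at least one of them, call it $J^*$, whose level-$1$ parent is some $P$. Because $\T$ itself is the level-$0$ interval, two applications of uniform $n$-adic doubling yield
\[
\mu(3I) \;\leq\; \mu(\T) \;\leq\; C\,\mu(P) \;\leq\; C^{2}\,\mu(J^*) \;\leq\; C^{2}\, \mu(I).
\]
For the complementary range $\ell \in [1/3, 1]$, the interval $I$ contains a $2$-adic interval of bounded depth (at most $3$), and uniform dyadic doubling similarly yields $\mu(\T) \leq C^{3} \mu(I)$, hence $\mu(3I) \leq C^{3} \mu(I)$.

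On $\R$, the torus argument breaks down because level $0$ is no longer the whole space. Writing $I = [a, a+\ell]$, the set $3I$ meets at most two consecutive unit intervals $P_- = [k-1, k]$, $P_+ = [k, k+1]$ around the unique integer $k$ in $3I$ (if any). Their common $n$-adic ancestor sits at level $-1$ whenever $n \nmid k$; otherwise the ancestor is deeper. So I will pick a prime $n$ with $n \geq 4/\ell$ and $n \nmid k$ (possible because $k$ has only finitely many prime divisors while there are infinitely many primes), which ensures the ancestor is exactly one level up and hence $\mu(P_-) \leq C\mu(P_+)$. I then descend one level below the covering intervals to find a level-$1$ $n$-adic interval $J^* \subset I$ (using that $|I \cap P_{\pm}| \geq \ell/2$ in at least one case), obtaining $\mu(3I) \leq 2C^{2} \mu(I)$ by the same chain as before. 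Intervals $I$ with $|I| \geq 1$ on $\R$ are handled by running the same analysis at negative $n$-adic levels.

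The main obstacle will be the $\R$ case: simultaneously arranging that $n$ is large enough that a level-$1$ $n$-adic interval fits inside $I$, while being coprime to the integer grid point(s) crossed by $3I$. This is exactly where the \emph{uniform} hypothesis on the $n$-adic doubling constant is essential: if $C$ were allowed to depend on $n$, pushing $n$ upward to dodge the number-theoretic obstruction would blow up the bound, and indeed Theorem~\ref{main} shows that relaxing uniformity destroys the conclusion entirely. Once both conditions on $n$ are met---an existential statement that follows from the infinitude of primes together with, if needed, Bertrand's postulate---the ordinary doubling of $\mu$ with constant depending only on $C$ follows.
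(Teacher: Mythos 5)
There is a genuine gap, and it is fatal: your central chain
\[
\mu(3I) \;\leq\; \mu(\T) \;\leq\; C\,\mu(P) \;\leq\; C^{2}\,\mu(J^*)
\]
misuses the definition of $n$-adic doubling. The constant $C$ bounds the ratio of measures of \emph{siblings} (same-generation children of a common parent), not the ratio of a parent to its child. For a parent $\hat J$ with $n$ children, the best you can say from $\mu(J_i)\leq C\mu(J_j)$ for siblings is $\mu(\hat J)\leq nC\,\mu(J)$, so each step down the tree costs a factor of $n$, not $C$. Your inequality $\mu(\T)\leq C\mu(P)$ is already false for Lebesgue measure ($\mu(\T)/\mu(P)=n$, unbounded), and the same issue recurs two more times in the torus chain and again in the $\R$ case when you descend from $P_\pm$ to $J^*$. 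Since your whole strategy requires $n\to\infty$ as $|I|\to 0$, those hidden factors of $n$ blow up, and the argument does not give a bound independent of $I$. The number-theoretic concern you flag on $\R$ (choosing $n$ coprime to $k$) is real but secondary; the parent-to-child traversal is where the proof breaks.

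The fix, and what the paper actually does, is to avoid traversing levels at all: tile \emph{both} intervals you want to compare with level-1 $n$-adic siblings of $[0,1)$. If $I_1$ contains at least $z-2$ such siblings $A_i$ and $I_2$ is covered by at most $z$ siblings $B_j$, then every $\mu(B_j)\leq M\mu(A_i)$ by the sibling bound, so summing gives $\mu(I_2)\leq \tfrac{Mz}{z-2}\mu(I_1)$, and letting $n\to\infty$ makes $z\to\infty$ and yields the clean constant $M$. The counts $z$ and $z-2$ cancel; no parent-to-child factor of $n$ ever appears. For $\R$, rather than chasing grid points and primes, the paper first runs this tiling argument on each $[\eta,\eta+1)$ to deduce that $\mu$ is absolutely continuous with density $f$ whose values at Lebesgue points lie within a ratio of $M$ on each unit interval, and then patches the unit intervals together using $n$-adic siblings across $\eta$. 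You should adopt the same-level tiling idea; your instinct to pick $n$ large depending on $I$ is correct, but the comparison must happen horizontally across siblings, not vertically up and down the $n$-adic tree.
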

    
    Using our construction from Theorem \ref{main}, we can obtain results on the intersection of weight and function classes. This improves previous results in the field from finite to infinite intersections, for a wide variety of objects including reverse H\"older and Muckenhoupt weights, the bounded mean oscillation (BMO) function class, Hardy spaces and the vanishing mean oscillation (VMO) function class.  These are natural and useful classes to consider as they permeate modern harmonic analysis.  Precise definitions of these classes are given in Section 5.

    \begin{theorem}\label{app}
       The following hold:
        \begin{enumerate}
            \item $\bigcap_{n \geq 2} RH^n_r \subsetneq RH_r$ for  $1 \leq r < \infty$
            \item $\bigcap_{n \geq 2} A^n_r \subsetneq A_r$ for  $1 < r \leq \infty$
            \item $\bigcap_{n \geq 2} BMO_n \subsetneq BMO$
            \item $\bigcap_{n \geq 2} H^1_n \subsetneq H^1$
            \item $\bigcap_{n \geq 2} VMO_n \subsetneq VMO$
        \end{enumerate}

    \end{theorem}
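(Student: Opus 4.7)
The plan is to deduce all five separations from the measure construction of Theorem~\ref{main}. For each class and each admissible parameter $r$ we select an absolutely continuous member $d\mu = w\,dx$ of the infinite family guaranteed by that theorem, choosing $\mu$ so that the $n$-adic doubling constants $C_n$ grow slowly enough in $n$---an option permitted by the remark following Theorem~\ref{main}---that the required $n$-adic class constants stay under control for every $n \geq 2$. The $n$-adic doubling will then translate, through the quantitative dictionary between doubling and $A_\infty$, into simultaneous membership in every $n$-adic class, while the global failure of doubling will witness non-membership in the unrestricted class.

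For items (1) and (2), the key input is the $n$-adic version of the classical Coifman--Fefferman self-improvement: an $n$-adic doubling weight with constant $C_n$ close enough to the minimum admissible value lies in both $A_r^n$ and $RH_r^n$, with constants depending only on $C_n$. By tuning $C_n$ small enough for the chosen $r$ one realizes $w$ in $\bigcap_{n \geq 2} A_r^n$ and in $\bigcap_{n \geq 2} RH_r^n$ simultaneously. Conversely, the interval $I_0$ witnessing the failure of global doubling provides sibling subintervals whose averages of $w$ differ by arbitrarily large ratios, violating both $A_r$ and $RH_r$ with any finite constant. Item (3) then follows by taking $f = \log w$ and invoking the Coifman--Rochberg correspondence, which localizes to any fixed $n$-adic grid; so $f \in BMO_n$ for every $n$ yet $f \notin BMO$.

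For item (4) one dualizes via $(H_n^1)^* = BMO_n$ to transfer the separation from (3) to the Hardy side, realizing it via atoms supported near $I_0$ that are admissible in every $n$-adic atomic decomposition but not in the unrestricted one. Item (5) is the most delicate, because $VMO$ demands not just bounded but \emph{vanishing} oscillation at small scales. The plan is to revisit the ABMPZ construction underlying Theorem~\ref{main} as a sum of scale-localized building blocks and damp these blocks so that the $n$-adic mean oscillations genuinely tend to zero at every small scale, uniformly in $n$, while a specific non-$n$-adic oscillation near $I_0$ remains bounded away from zero.

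The principal obstacle throughout is quantitative bookkeeping. For (1)--(2) one needs the self-improvement exponents to depend on $C_n$ mildly enough that, for each fixed $r$, a slow enough growth of $C_n$ keeps the $n$-adic $A_r^n$ and $RH_r^n$ constants finite for every single $n$ at once; since Theorem~\ref{uniform} forbids these constants from being bounded in $n$, one is working right at the edge of what is possible. For (5) one must refine the scale-by-scale structure of the ABMPZ construction to yield genuine vanishing of $n$-adic oscillation at all small scales simultaneously in $n$, a strictly finer requirement than the BMO separation used in (3).
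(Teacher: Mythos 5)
Your plan diverges from the paper's at the crucial step, and the step where you diverge is the one that does not close.

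For items (1) and (2) you invoke ``the $n$-adic version of the classical Coifman--Fefferman self-improvement: an $n$-adic doubling weight with constant $C_n$ close enough to the minimum admissible value lies in both $A_r^n$ and $RH_r^n$.'' No such theorem holds: even in the dyadic case, doubling does \emph{not} imply dyadic $A_\infty$, and this failure persists with doubling constant arbitrarily close to $1$. Concretely, the weight built by splitting every dyadic interval in the ratio $(1-\varepsilon):(1+\varepsilon)$ at every scale has dyadic doubling constant $\frac{1+\varepsilon}{1-\varepsilon}$ (arbitrarily near $1$), yet $\log w$ has divergent square function on every interval and so $w \notin A_\infty^{\mathrm{dy}}$, hence $w$ is in no dyadic $RH^r$. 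Thus there is no threshold on $C_n$ below which membership in $RH_r^n$ is automatic, and even if there were one, Theorem~\ref{uniform} forces $C_n \to \infty$, so the threshold would eventually be breached. Your own closing remark flags this tension, but it is not a delicacy of bookkeeping---it is a structural obstruction. The paper's Theorem~\ref{RH thm} does something genuinely different: it exploits the specific anatomy of the constructed weight---re-weighting confined to finitely many scales in each block, blocks nested in a strictly descending chain $\cdots \subset L_n^{(\eta)} \subset R_n^{(\eta)} \subset \cdots$ with average value $1$ on each annulus $S_j$---and runs a six-case analysis over $n$-adic intervals $J$, using Lemma~\ref{RH aux lemma} to propagate $RH$ constants across the decomposition. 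The single hard case ($J$ meets the re-weighting zone and has $Y_n$ as an endpoint) is settled by the condition $b^r < 2$, which makes a geometric series converge; this inequality is the real source of $RH_r^n$ membership, and it cannot be extracted from the doubling constant alone.

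Item (3) you handle as the paper does (log of $A_\infty^n$ lies in $BMO_n$), but your $A_\infty^n$ input was obtained via the flawed self-improvement, so the gap propagates. For items (4) and (5) the paper dispenses with any constructive argument entirely: it applies the abstract Banach-space facts that $\bigcap_n X_n^* \neq X^*$ forces $\sum_n X_n \neq X$, and $\sum_n X_n^* \neq X^*$ forces $\bigcap_n X_n \neq X$, feeding in the dualities $(H^1)^* = BMO$, $(VMO)^* = H^1$ and their $n$-adic counterparts. Your proposal for (4) via atoms ``admissible in every $n$-adic atomic decomposition but not in the unrestricted one'' would need to be made precise, and your proposal for (5)---re-engineering the ABMPZ construction so that all $n$-adic oscillations vanish at small scales uniformly in $n$ while some non-adic oscillation survives---is considerably harder than what is needed; the oscillation in the construction lives near a sequence of points accumulating at $0$, and damping it to zero for all $n$-adic intervals while keeping it alive for nearby arbitrary intervals is exactly the kind of simultaneous-scales constraint that the paper sidesteps by working on the dual side.
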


Though inspired by recent advances in the area, these results require significantly new ideas.  Since the previous constructions already involve a subtle iteration, we had to carefully introduce a new iteration scheme that would seamlessly integrate into the former setup.  This new technical advancement iterates the construction in \cite{ABMPZ} a countably infinite number of times, with each iteration carefully placed to interplay with the next iteration.  The care to ensure that all the constants behave in the proper way to ensure $n$-adic doubling yet not doubling requires intricate computation and precise geometric bounds to work in tandem with the extensive number theory involved in previous works.  Due to this care, we were able to track the $n$-adic doubling constants and show the limit of this construction in Theorem \ref{thm3}.  In particular, a specific key difference between ours and previous works pertains to a parameter $\alpha$.  In past constructions, $\alpha$ would cycle through the natural numbers.  Now we allow $\alpha$ to be a more general sequence of natural numbers that grows to infinity, and, more substantially, $\alpha$ is chosen dynamically -- depending on the interval where it was originally defined.  Much of the motivation for this new construction is outlined in Sections 3 and 4, beginning with an illustrative heuristic.

This paper is organized as follows.  Section 2 provides background and definitions and concludes with a guide to how our three main results, Theorem \ref{main}, Theorem \ref{uniform} and Theorem \ref{app}, are proved.  Section 3 starts with a detailed heuristic about constructing measures that are $n$-adic doubling for all $n$ yet not doubling, and ends with several constructions of such measures, showing Theorem \ref{uniform}.  Section 4 centers around the proof of Theorem \ref{main}, which is accomplished via a series of steps, presenting the intricate iterated nature of the construction while simultaneously reviewing and generalizing the main construction in \cite{ABMPZ}.  Finally, we analyze the \emph{doubling constants} from our construction, culminating in Theorem \ref{thm3}.  The final section, Section 5, includes all the applications, which take significant care, resulting in the proof of Theorem \ref{app}.  Detailed definitions of the weight and function classes appear here along with connections to previous work.  In our proofs we attempt to use the letter $J$ for a $n$-adic interval and $I$ for other intervals (including arbitrary ones).

\subsection{Acknowledgements}
This research is supported by National Science Foundation DMS grants 2231990, 2237937 (TCA).  The authors thank David Cruz-Uribe and Lesley Ward for helpful feedback related to this work.  
\footnote{Corresponding author (TCA): Carnegie Mellon University, tanders2@andrew.cmu.edu. Authors DP and AR: University of Pittsburgh, KY: Carnegie Mellon University.  Keywords: doubling measures, weights, dyadic harmonic analysis}    
    \section{Preliminaries}
    We begin by recalling some definitions, but keep this section brief and refer the interested reader to the many excellent references on doubling measures, dyadic harmonic analysis, and related concepts, some of which are listed in the bibliography (\cite{Cruz}, \cite{LPW}, \cite{P}).  A simple motivating example appears at the end of the section, providing an introduction to the subtleties addressed in the proofs of our first two theorems.  We conclude with a guide to proving the main results at the end of this section.
    \subsection{Definitions}

    Throughout this paper we work on the real line or the torus with the points 0 and 1 identified.  When we work with intervals contained in $[0,1]$ it will be unimportant whether the ambient space is $\mathbb{R}$ or $\mathbb{T}$.  Much of the focus in this paper will be on intervals of this type.  For our purposes (unless otherwise stated) 
    all intervals we refer to are bounded half-open 
    intervals with a left endpoint. We will often work with a distinguished collection of \emph{n-adic} intervals, defined below, which have a prescribed structure.

    \begin{definition}
        A doubling measure $\mu$ is a measure for which
        there exists a positive constant $C$ such that
        for any adjacent intervals
            $I_1, I_2$ of equal length we have
             \[ \mu(I_1) \leq C \mu(I_2). \]
        Equivalently,
\[
\frac{1}{C} \leq \frac{\mu(I_1)}{\mu(I_2)} \leq C.
\]
    \end{definition}

    Note that this constant $C$ must not depend on $I_1, I_2$, though it
    will depend on $\mu$.  We now define the more restrictive $n$-adic intervals.  When $n=2$ these are the dyadic intervals. 

    \begin{definition}
        For a positive integer $n \geq 2$, the standard $n$-adic system $\mathcal{D}(n)$ is the collection of $n$-adic intervals in $\mathbb{R}$ of the form
        \[ I=[\frac{k-1}{n^m}\frac{k}{n^m}) \text{ for } m,k\in\mathbb{Z}. \]
    \end{definition}

    \begin{definition}
        For an $n$-adic interval $I=[\frac{k-1}{n^m}\frac{k}{n^m})$, the $n$ children of $I$ are given by: 
            \[ I_j=[\frac{k-1}{n^m}+\frac{j-1}{n^{m+1}},\frac{k-1}{n^m}+\frac{j}{n^{m+1}}) \text{ for } 1\leq j\leq n.\] 
        
    \end{definition}
        The definition is slightly different on the torus due to the lack of large $n$-adic intervals, but we will not need to consider those here. 
        
        We say that the $n$-adic intervals $J_1$ and $J_2$ are $n$-adic siblings if they are the children of some common $n$-adic interval $J$. 
        It is
        necessary but not sufficient for $J_1, J_2$ to have the same length. 
        
    \begin{definition}
        An $n$-adic doubling measure $\mu$ is a measure for which
        there exists a positive constant $C(n)$ such that
        for any $n$-adic siblings $J_1, J_2$ we have 
        \[ \mu(J_1) \leq C(n) \mu(J_2). \]
        or equivalently,
        \[
        \frac{1}{C(n)} \leq \frac{\mu(J_1)}{\mu(J_2)} \leq C(n).
        \]
    \end{definition}

    Note that the domain of $\mu$ may be the whole real line, but oftentimes will be restricted to a subset, such as $[0,1]$, in ours and related constructions.  when the domain of $\mu$ is not an interval with integer endpoints, we ignore $n$-adic intervals that only partially lie in $\mu$'s domain.
    
        We pause to remark on some important distinctions between intervals on $\mathbb{R}$ and $\mathbb{T}$.  First, let's discuss $n$-adic intervals.  In both of these domains, $n$-adic intervals of length at most $1$ behave the same.  In particular, intervals of this type in $[0,1]$ and $\mathbb{T}$ are identical.  Moreover, on the torus there are no larger $n$-adic intervals due to the identification of the points $0$ and $1$.  Furthermore, the measures we define function as Lebesgue measure outside of a bounded set, so $n$-adic intervals of length larger than $1$ will simply inherit Lebesgue measure and therefore the measure will be automatically $n$-adic doubling on these intervals.  Thus for all of our main results, Theorems \ref{main}, \ref{uniform} and \ref{app}, we only need to focus on $n$-adic intervals of length at most $1$, handling $\mathbb{T}$ as $[0,1]$, and requiring no extra work.

        Now, let's discuss arbitrary intervals.  To show a measure is not doubling, it suffices to find an infinite sequence of pairs of adjacent intervals $(G,H)$ with $|G| = |H| <1$ such that the ratio $\frac{\mu(G)}{\mu(H)}$ is unbounded.  We will hence only be analyzing small intervals inside $[0,1]$ (or analogous domains), making again the case $\mathbb{R}$ and $\mathbb{T}$ identical.  There are two potential subtleties.  One potential difference between $\mathbb{R}$ and $\mathbb{T}$ occurs in Theorem \ref{uniform} with intervals of any length that contain $0$, where working on $\mathbb{T}$ is potentially more restrictive due to the identification.  However, we will show doubling on $\mathbb{R}$ which will imply doubling for $\mathbb{T}$.  The second difference occurs in the definition of the space $VMO$ in Theorem \ref{app}, but since we use duality of function spaces in this claim (which is retained with the stated different definitions on $\mathbb{R}$ versus $\mathbb{T}$), we will not need to use the definition directly.  This remark illustrates the subtle differences between $\mathbb{R}$ and $\mathbb{T}$ that were taken into consideration both here and in preceding results.

\subsection{Motivating example}
    We begin with the first observation that if the domain $\mu$ contains an integer point in the interior, 
    then we have a straightforward construction to Theorem \ref{main} for $\mathbb{R}$.  This also explains why prior work (as well as ours) focused on measures defined in $[0,1]$ or $\mathbb{T}$. 
    
    Consider the absolutely continuous measure $\mu$ with domain $[-1,1)$ with density defined as 
    \[ f := \frac{d\mu}{d\mathcal{L}} = 
    \begin{cases} 1 \text{ if } x < 0, \\ x \text{ if } x > 0 \end{cases}. \]
    Then easily $\mu$ is $n$-adic doubling with constant $C(n) = 2n-1$, 
    attained when $J_1 = [(n-1)/n^k, n/n^k)$ and $J_2 = [0, 1/n^k)$ for $k \geq 1$. 
    However, $\mu$ is not doubling, for $\mu$ crosses $0$, and doubling can detect this.  This example illustrates the difference between arbitrary intervals, where 0 can be an interior point, and $n$-adic intervals, where 0 is never an interior point.  It also motivates the study of \emph{adjacent dyadic systems}, another intriguing area of study in modern analysis; however, we do not consider these here.

    Another similar construction can be made for the density
    \[ f := \frac{d\mu}{d\mathcal{L}} = 
    \begin{cases} 1 \text{ if } x < 0, \\ 2^{-k} \text{ if } x \in [2^{-k},2^{-k+1}), k \geq 0 \end{cases}. \]
    This infinite staircase example is also $n$-adic doubling and not doubling, 
    and the doubling constant is also attained at $J_1 = [(n-1)/n^k, n/n^k)$ and $J_2 = [0, 1/n^k)$ for appropriate $k$. 
\subsection{Plan for proof of main results}
We provide a brief guide as to how our main results are proved.  Section 3 provides a heuristic behind Theorem \ref{uniform} and then we prove Theorem \ref{uniform} via two lemmas: Lemma \ref{3.1} and Lemma \ref{3.2}.  Section 4 begins by recalling a key result from the construction of \cite{ABMPZ} followed by the proof of Theorem \ref{main} via three steps: Theorems \ref{thm1}, \ref{thm2} and \ref{thm3}.  Finally Section 5 begins the proof of Theorem \ref{app} via Lemma \ref{RH aux lemma} and Theorem \ref{RH thm}, which prove the first part of Theorem \ref{app} (the claim for reverse H\"older weights{.  This is followed by the (short) proofs of the other claims, which follow in a streamlined fashion from part 1 of Theorem \ref{app}.

    \section{$n$-adic doubling constants and uniformity}
        Note that is easy to decrease the $n$-adic doubling constant
    $C(n)$ from the previous example by changing $x$ to $x^m$ for some $0 < m < 1$. 
    Nevertheless, $C(n)$ will depend on $n$. 
    One may wonder if $C(n)$ can made to be independent of $n$.
    We show that this is not possible, regardless of the 
    domain.  The goal of this section is provide motivation behind Theorem \ref{uniform}, connecting this result in a variety of ways to others in the field, and eventually providing an expedient proof.    

    \subsection{Motivation}

We first provide a heuristic as to why the $n$-adic constant for any weighted measure must depend on $n$ unless it is also doubling.  We also outline connections between this heuristic and three previous papers in the area, namely \cite{Krantz}, \cite{AH} and \cite{ABMPZ}.  To match notation, we consider $n=p$ for $p$ prime, but everything observed here works for general $n$.

The authors in \cite{AH} disproved a conjecture of Krantz \cite{Krantz} by proving the following: 

    Fix $C>0$.  For every $m\in\N$ there exists a $C(m)$ for every natural number $\beta$ and $p$ prime such that for $\frac{1}{C}\leq\frac{p^n}{2^m}\leq C$
    \begin{equation}
    \label{separation_similiar_scales}
        \bigg|\frac{1}{2^m}-\frac{\beta}{p^n}\bigg|\geq\frac{C(m)}{2^m}.
    \end{equation}

Recently, by choosing the sequence $n=[mlog_{p} 2]$, $\beta=1$, \cite{ABMPZ} showed that for every $\epsilon>0$: 

\begin{equation}
\label{closeness}
    \bigg|\frac{1}{2^m}-\frac{1}{p^n}\bigg|<\frac{\epsilon}{2^m}
\end{equation}
holds for infinitely many $m$. 

In their notation, \eqref{closeness} becomes the assertion that $|Y-Z|<\epsilon|J|$, where $I$ is a dyadic interval of size $\frac{1}{2^m}$, $Y$ is an endpoint of one of the children of $I$, and $Z$ is an endpoint of a $p$-adic interval $J_{p_1}$ of size $\frac{1}{p^n}$.  In other words, distinguished points of dyadic and $p$-adic intervals line up very nicely, within an $\epsilon$-window when scaled to the length of $I$.  Precise details are reviewed in the extension that we pursue in the next section. 

Since we have \eqref{closeness}, we can examine the ratio $\frac{\mu(J_{p_1})}{\mu(J_{p_2})}$ used in calculating the $p$-adic doubling constant for any measure $\mu$ that is a reweighting of Lebesgue measure.  We first look at this ratio in terms of Lebesgue measure:

\[\frac{|J_{p_1}|}{|J_{p_2}|} = \frac{|I+\widetilde{\epsilon}I|}{|J_{p_2}|}\simeq\frac{\frac{1}{2^m}}{\frac{1}{p^n}}=\frac{p^n}{2^m},
\]

where $\widetilde{\epsilon}\leq\epsilon$ and $\simeq$ indicates equality up to this $\epsilon$.

If we also assume $p$-adic doubling for some universal $C$, independent of $p$, we have 
\[
\frac{1}{C}\leq\frac{\mu(J_{p_1})}{\mu(J_{p_2})}\leq C.
\]
 We run into an issue if we want to consider an infinite set of $p$, as for any newly chosen $\epsilon>0$, $\frac{p^n}{2^m}\geq p^{\epsilon}$ or $\frac{p^n}{2^m}\leq\frac{1}{p^{\epsilon}}$ infinitely often.  Thus in rewriting the $\mu$-ratio in terms of its weights and Lebesgue measure, $p^\epsilon$ terms will always appear. Moreover, if Schanuel's conjecture is true, we could get comparison constants of ${p^{\frac{1}{2}-\epsilon}}$, contradicting our assertion of a universal $C$ for all $p$. In other words, even in terms of the Lebesgue measure, the ratio for $p$-adic intervals that we want to compare must depend on $p^{\epsilon}$ infinitely often.  This factor of $p^\epsilon$ cannot be offset by any fixed reweighting if the set of $p$ is also infinite.

For an finite set of $p$, this observation is not problematic as the doubling constant can be bounded by $C$: $(\max p)^{\frac{1}{2}}\leq C$ (such as in \eqref{separation_similiar_scales} where $\frac{p^n}{2^m}\leq C$ is assumed). Moreover, these observations easily generalize from dyadic to $l$-adic and $p$-adic to $n$-adic intervals when $n$ not a multiple of $l$.  Therefore a weighted measure that is $n$-adic doubling for every $n$ must be either also doubling itself or have each $n$-adic doubling constant dependent on $n$.

\subsection{Proof of Theorem \ref{uniform}}
We split the proof to two steps, first for $\mu$ on $[0,1)$, and then $\mu$ on $\mathbb{R}$.  We briefly remark on considerations for the torus at the end of the second lemma; hence these lemmas will immediately give Theorem \ref{uniform}. 

\begin{lemma}
\label{3.1}
    Let $\mu$ defined on $[0,1)$ be uniformly $n$-adic doubling. 
    Then $\mu$ is also doubling with the same constant. 
\end{lemma}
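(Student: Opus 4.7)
The plan is to approximate the adjacent intervals $I_1, I_2$ (of common length $L$) from the inside by disjoint unions of level-one $n$-adic intervals (for $n$ large) and exploit the fact that, since $[0,1)$ is itself an $n$-adic interval for every $n \geq 2$, its level-one children are all mutual siblings. First, I would record the key uniform pointwise estimate: the intervals $J^n_k := [(k-1)/n, k/n)$ for $k = 1, \dots, n$ are all pairwise $n$-adic siblings, so uniform doubling yields $\mu(J^n_{k_1}) \leq C\mu(J^n_{k_2})$ for all $k_1, k_2$. Fixing any $k^*$ and summing over $k$ gives $\mu([0,1)) \leq nC\mu(J^n_{k^*})$, hence $\mu(J^n_k) \leq C\mu([0,1))/n$ for every $k$.

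Next, I would write $I_i = [a_i, a_i + L) \subset [0,1)$ and set $S_i := \{k : J^n_k \subset I_i\}$ and $\tilde I_i := \bigcup_{k \in S_i} J^n_k$. The remainder $I_i \setminus \tilde I_i$ lies in at most two level-one intervals (the boundary pieces at each end of $I_i$), so the previous estimate gives $\mu(I_i \setminus \tilde I_i) \leq 2C\mu([0,1))/n$. A simple length count yields $nL - 2 \leq |S_i| \leq nL$, hence $\bigl||S_1| - |S_2|\bigr| \leq 2$.

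The heart of the argument is a sibling-pairing step. If $|S_1| \leq |S_2|$, I would inject $S_1$ into $S_2$ and apply $\mu(J^n_k) \leq C\mu(J^n_{k'})$ to each paired $(k,k')$; summing over the injection gives $\mu(\tilde I_1) \leq C\mu(\tilde I_2)$. If $|S_1| > |S_2|$, the at most two unpaired members of $S_1$ contribute only $O(1/n)$ more via the pointwise bound. Combining with the boundary estimate produces
\[
\mu(I_1) \leq C\mu(I_2) + O(1/n),
\]
and letting $n \to \infty$ delivers $\mu(I_1) \leq C\mu(I_2)$ with the \emph{same} constant.

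The main obstacle is ensuring the error terms vanish \emph{uniformly} in the position of $I_1, I_2$, which is exactly what the bound $\mu(J^n_k) \leq C\mu([0,1))/n$ provides. That bound in turn relies on $[0,1)$ itself being a level-zero $n$-adic interval for every $n$, giving the global parent structure that forces its children to be mutual siblings. This structure is missing for intervals near an integer on $\mathbb{R}$ or on the torus, which is why Lemma \ref{3.2} will require a separate argument.
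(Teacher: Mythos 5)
Your proposal is correct and takes essentially the same route as the paper: both approximate $I_1$ and $I_2$ from inside and outside by the level-one children of $[0,1)$ (which are mutual $n$-adic siblings for every $n$), apply the uniform sibling bound, and let $n\to\infty$ so that the boundary error vanishes and the constant $C$ is recovered exactly. The paper absorbs the boundary discrepancy via the ratio $z/(z-2)\to 1$ rather than your explicit pointwise bound $\mu(J^n_k)\le C\mu([0,1))/n$, but this is a cosmetic difference, not a different argument.
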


\begin{proof}
    
Let $\mu$ be a measure which is uniformly $n$-adic doubling for arbitrarily large $n$ with constant $M$. Let $I_1$ and $I_2$ be arbitrary intervals
of the same length (they need not be adjacent).

Choose $n$ large such that $\mu$ is $n$-adic doubling. Let $z = \lceil n / \vert I_1 \vert
\rceil$. There are at least $(z - 2)$ $n$-adic intervals
that are the children of $[0,1)$ fully contained within $I_1$. Call any such child $A_i$. 
Likewise, there are at most $z$ $n$-adic intervals that are the children of $[0,1)$
that cover $I_2$. Call any such child $B_i$.

Since for any $i,j$ we have that $A_i$ and $B_j$ are siblings,
we can bound $\mu(B_j) \leq M \mu(A_i)$. 
Thus 
\[ \mu(I_2) \leq \sum_{i=1}^{z} \mu(B_i) \leq
\frac{Mz}{z-2} \sum_{i=1}^{z-2} \mu(A_i) \leq \frac{Mz}{z-2} \mu(I_1). \]
By taking $n \rightarrow \infty$, we get $z \rightarrow \infty$ and
conclude that $\mu$ is doubling also with constant $M$.
\end{proof}

    \begin{lemma}
    \label{3.2}
    Let $\mu$ defined on $\mathbb{R}$ be uniformly $n$-adic doubling,
    and $0 < \mu([-1,0)), \mu([0,1)) < \infty$.
    Then $\mu$ is also doubling. 
    \end{lemma}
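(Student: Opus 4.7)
The plan is to extend Lemma \ref{3.1} from $[0,1)$ to all of $\R$, where the chief new difficulty is that $0$ is never an interior point of any $n$-adic interval. No single $n$-adic sibling comparison can control an interval spanning across $0$, and this is precisely where the hypothesis on $\mu([-1,0))$ and $\mu([0,1))$ becomes essential.

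First, I would apply Lemma \ref{3.1} to each unit interval $[k,k+1)$ for $k\in\Z$. Since the $n$-adic structure is invariant under integer translations, uniform $n$-adic doubling on $\R$ restricts to uniform $n$-adic doubling on each $[k,k+1)$, and Lemma \ref{3.1} yields doubling with constant $M$ on every unit interval. Next I would bound $\mu([k-1,k))/\mu([k,k+1))$ uniformly in $k$. For $k\neq 0$, any $n>|k|$ makes $[k-1,k)$ and $[k,k+1)$ two children of the $n$-adic interval $[0,n)$ (if $k>0$) or $[-n,0)$ (if $k<0$), so the ratio is at most $M$. For $k=0$ the hypothesis gives the bound $R:=\max\{\mu([-1,0))/\mu([0,1)),\mu([0,1))/\mu([-1,0))\}$, and we set $M':=\max(M,R)$.

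The key technical step is a \emph{linear} boundary-decay estimate: for each $k\in\Z$ and $\ell\in(0,1)$,
\[
\frac{\ell}{2M}\ \leq\ \frac{\mu([k,k+\ell))}{\mu([k,k+1))}\ \leq\ 2M\ell,
\]
with the analogous estimate for $\mu([k-\ell,k))/\mu([k-1,k))$. To prove it, take $n=\lceil 1/\ell\rceil$; then the $n$-adic siblings $[k+j/n,k+(j+1)/n)$ for $j=0,\dots,n-1$ are children of $[k,k+1)$, and the pairwise doubling bounds together with their sum force each to have measure in $[\mu([k,k+1))/(nM),M\mu([k,k+1))/n]$. Since $\ell/2\leq 1/n\leq\ell$, the interval $[k,k+\ell)$ contains one such sibling and is contained in the union of two, giving the displayed bounds. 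This linear rate (as opposed to the weaker polynomial rate derivable from doubling on a single unit interval alone) is what lets the whole argument close, and this is the step where \emph{uniformity} in $n$ is crucially used, via letting $n$ depend on $\ell$.

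Assembling the pieces, for adjacent $I_1=[k-\ell,k)$, $I_2=[k,k+\ell)$ with $\ell\leq 1$, the factorization
\[
\frac{\mu(I_1)}{\mu(I_2)}=\frac{\mu([k-\ell,k))}{\mu([k-1,k))}\cdot\frac{\mu([k-1,k))}{\mu([k,k+1))}\cdot\frac{\mu([k,k+1))}{\mu([k,k+\ell))}
\]
is bounded by $(2M\ell)\cdot M'\cdot(2M/\ell)=4M^2M'$, independent of $k$ and $\ell$. Configurations where the integer $k$ sits strictly interior to $I_1$ or $I_2$ reduce to this case by splitting at $k$ and invoking doubling on each unit interval. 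For adjacent pairs of length $\ell>1$, iterating the unit-interval ratio bound (chaining through the origin via the $M'$-estimate gives $\mu([a,a+1))/\mu([b,b+1))\leq M^2 R$ for any $a,b\in\Z$) shows $\mu(I_1)$ and $\mu(I_2)$ are each within a bounded factor of $\lfloor\ell\rfloor$ copies of a reference unit measure, again yielding a uniform bound. The main obstacle throughout is the $k=0$ case: the absence of any $n$-adic interval containing $0$ in its interior is exactly why the hypothesis $0<\mu([-1,0)),\mu([0,1))<\infty$ cannot be dropped, since all other comparisons are supplied uniformly by $n$-adic doubling with $n$ chosen to suit the geometry.
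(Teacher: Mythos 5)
Your proof is correct in its main ideas but follows a genuinely different route than the paper. The paper's argument goes through regularity: applying Lemma~\ref{3.1} on each unit interval yields doubling there, from which it deduces that $\mu$ is absolutely continuous with density $f$; it then uses the Lebesgue differentiation theorem (comparing $\mu(B(x,r))$ and $\mu(B(y,r))$ at Lebesgue points via large-$n$ $n$-adic approximations) to show that $f$ is essentially pinched between $M^{-1}C$ and $M^2C$ on each half-line, and that the unit-interval masses $A_\eta$ are all comparable, so $f$ is essentially bounded above and below on $\R$, hence doubling. Your argument is more elementary and quantitative: it bypasses absolute continuity and the density entirely, and the key technical input is the linear boundary-decay estimate $\frac{\ell}{2M}\le \mu([k,k+\ell))/\mu([k,k+1)) \le 2M\ell$, proved cleanly by taking $n=\lceil 1/\ell\rceil$ and using the partition-plus-pairwise-bound pigeonhole; this is exactly the step where uniformity in $n$ enters. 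Combined with the uniform comparability of adjacent unit-interval masses (with the origin handled by the hypothesis, giving the constant $M'$), this yields an explicit bound $4M^2M'$ on the ratio for intervals abutting an integer, which the paper does not produce (and even remarks that the exact constant cannot be controlled, a statement your bound refines by separating out the $M'$-dependence). The one place you should tighten is the ``assembling'' paragraph: splitting a general pair $I_1,I_2$ at the integer(s) in its interior does not reduce to ``doubling on each unit interval'' directly, since the split pieces have unequal lengths and the union of $I_1,I_2$ with $|I_1|=|I_2|\le 1$ may straddle two integers; what you actually need, and what your tools supply, is to write each $I_j$ as a disjoint union of at most two integer-abutting pieces, apply the linear decay estimate to each, and then use the unit-interval comparability to combine. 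Similarly, the $\ell>1$ case should invoke the chain of bounds $\mu([a,a+1))/\mu([b,b+1))\le M^2M'$ explicitly, summing over the (at most $\lceil \ell\rceil+1$) unit intervals meeting each of $I_1,I_2$. These are bookkeeping issues, not conceptual gaps; the engine of the proof is sound.
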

    
\begin{proof}
     Let $\mu$ be uniformly $n$-adic doubling with constant $M$. 
     Then, $\mu([\eta,\eta+1))$ is bounded above over $\eta \in \mathbb{N}$.
     From the previous theorem applied to $\mu$ restricted to $[\eta,\eta+1)$,
     since $\mu$ is doubling, 
     we can actually conclude that $\mu$ is absolutely continuous. 
    Let $f$ be the density of $\mu$.
    
    Suppose that $x,y$ are Lebesgue points of $f$ on $[\eta, \eta+1)$. 
    Then, $f(x) = \lim_{r \rightarrow 0} \mu(B(x,r))$ and likewise for $y$.
    However, $\mu(B(x,r))$ can be approximated by $n$-adic intervals for sufficiently large $n$,
    so we have that $\mu(B(x,r)) \leq M \mu(B(y,r))$, from the previous theorem. 
    Letting $r \rightarrow 0$ yields that $f(x) \leq M f(y)$.
    Therefore, the range of $f$ on its Lebesgue points must be contained within some range $[C, MC]$ for $C > 0$, and $f$ is doubling with constant $M$.
    
    Suppose that $\mu([\eta,\eta+1)) = A_\eta$. Then, by the previous result the range of $f$
    is contained within $[M^{-1} A_\eta, MA_\eta]$. However, the set of all $A_\eta$ for $\eta \in \mathbb{N}$
    must be contained within some range $[C,MC]$.  By applying $n$-adic doubling for sufficiently large $\eta$,
    the range of $f$ is contained in $[M^{-1} C, M^2 C]$ on $[0,\infty)$,
    so $f$ is doubling with constant $M^3$ on $[0,\infty)$. 
    
    The same argument can be applied to $\mu$ on $(-\infty,0)$, to yield that $\mu$ is
    bounded above and away from zero again in a range difference of $M^3$. Thus
    $f$ is essentially bounded above and away from zero on $\mathbb{R}$,
    and therefore doubling. However, we cannot control the exact doubling constant,
    since the ratio of $\mu([-1,0))$ to $\mu([0,1))$ can 
    take any finite value.  On the torus we automatically have that $\mu([0,1]) = \mu([-1,0])$, so the assumption in the lemma is automatic.  Additionally, the proof can be analogously reworked under the assumption that $0<\mu[-c,0]),\mu([0,c]<\infty$ for any $c$, thus leading the the assumption that $\mu \neq 0,\infty$.  These observations underscore the well-known importance of the origin to the standard $n$-adic grids.
\end{proof}

    \section{Proof of Theorem \ref{main}}
We will prove Theorem \ref{main} in three stages -- Theorems \ref{thm1}, \ref{thm2}, and \ref{thm3} below.  We begin by recalling a key lemma from \cite{ABMPZ} that provides the necessary number theoretic structure in our construction.  This lemma was already alluded to in the previous section.

    \begin{lemma} \label{Lemma4}
    Let ${n_1,...,n_k}\subseteq\mathbb{N}$, $\alpha\in\mathbb{N}$ and $\epsilon=2^{-100\alpha}>0.$ Then, there exists infintely many $x\in\mathbb{N}$ such that for all $1\leq i\leq k$ that 

    \[ \Bigg| \frac{1}{2^x}-\frac{1}{n_i^{[xlog_{n_i}{2}]}} \Bigg| <\frac{\epsilon}{2^x} \]
    \newline
    \end{lemma}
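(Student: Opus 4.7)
The plan is to recast the lemma as a simultaneous Diophantine approximation problem in logarithmic coordinates and then invoke equidistribution. Specifically, I would set $\theta_i := \log 2/\log n_i$ and $y_i := [x\theta_i]$, so that $2^x/n_i^{y_i} = n_i^{x\theta_i - y_i}$. The target inequality $\vert 1/2^x - 1/n_i^{y_i}\vert < \epsilon/2^x$ is then equivalent to $\vert 1 - n_i^{x\theta_i - y_i}\vert < \epsilon$, and via the elementary bound $\vert 1 - e^u\vert \leq 2\vert u\vert$ for $\vert u\vert \leq 1$ this is implied by $\vert x\theta_i - y_i\vert\log n_i < \epsilon/2$. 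Setting $\delta := \epsilon/(2\max_i \log n_i)$, the task reduces to exhibiting infinitely many $x\in\N$ for which $\{x\theta_i\} < \delta$ holds for every $i$ (interpreting $[\cdot]$ as the floor; the nearest-integer convention would only ask for $\vert x\theta_i - y_i\vert < \delta$, which is strictly easier).

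Next, I would consider the orbit $O := \{(x\theta_1,\ldots,x\theta_k)\bmod 1 : x \in \N\} \subset \T^k$. By Kronecker's theorem, its closure $H$ is a closed subgroup of $\T^k$. If $H$ is finite, each $\theta_i$ is rational, forcing every $n_i$ to be a power of $2$; then any common multiple of the associated denominators serves as $x$, yielding infinitely many solutions. Otherwise $H$ is infinite, and Weyl equidistribution distributes $O$ uniformly on $H$. In the generic case where $\theta_1,\ldots,\theta_k,1$ are $\QQ$-linearly independent -- which, when the $n_i$ are multiplicatively independent, follows from Baker's theorem on linear forms in logarithms applied to $\log 2$ and the $\log n_i$ -- one has $H = \T^k$, and the orbit enters the box $[0,\delta)^k$ (of Lebesgue measure $\delta^k$) infinitely often, producing the desired $x$. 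For degenerate configurations with rational dependencies among the $\theta_i$, I would reduce to a maximal subcollection of multiplicatively independent indices and then lift the approximation back through the rational relations. Unraveling the equivalences for each such $x$ then delivers the target inequality, and shrinking $\delta$ along a sequence provides infinitely many distinct $x$.

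The hard part is the one-sidedness of the required approximation: with the floor convention one needs $\{x\theta_i\} < \delta$ literally, not merely $\vert x\theta_i - y_i\vert < \delta$ for some integer $y_i$. Classical Dirichlet-style simultaneous approximation is two-sided, and on a proper subtorus of $\T^k$ the positive octant near $0$ can be thin or meet the subtorus only at $0$ itself. Resolving this cleanly is precisely where the transcendence input -- Baker's theorem, or more broadly the arithmetic interaction of $\log 2$ with the $\log n_i$ -- combined with careful analysis of the subgroup geometry of $\T^k$ becomes essential. If instead $[\cdot]$ denotes the nearest integer, this obstacle vanishes entirely and the proof reduces to a direct application of Dirichlet's simultaneous approximation theorem in logarithmic coordinates.
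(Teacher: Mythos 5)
This lemma is not proved in the paper; it is imported verbatim from \cite{ABMPZ}, so there is no in-paper argument to compare against. Evaluating your proposal on its own terms:

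Your final sentence is the whole proof. The bracket $[\cdot]$ in this lemma is the nearest-integer function, not the floor: the paper's Figures 2 and 3 explicitly note that ``$Z$ can be on either side of $Y_n$,'' which is impossible under the floor (where one would always have $Y_n \geq Z$) or the ceiling convention. With nearest-integer rounding, your reduction to $|x\theta_i - y_i| < \delta$ for integers $y_i$ is exactly right, and Dirichlet's simultaneous approximation theorem, together with a standard pigeonhole/differencing step to upgrade ``some $x$'' to ``infinitely many $x$,'' closes the argument. That is a clean and correct proof. (A small bookkeeping point worth stating: once $|x\theta_i - y_i| < \delta < 1/2$, the integer $y_i$ produced by Dirichlet is automatically the nearest integer to $x\theta_i$, so it does equal $[x\theta_i]$.)

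The bulk of your write-up, however, chases the floor interpretation, and there the argument has a genuine gap. You appeal to Baker's theorem to conclude that $1, \theta_1, \ldots, \theta_k$ are $\QQ$-linearly independent when the $n_i$ are multiplicatively independent. Baker's theorem controls $\QQ$-linear (indeed $\bar{\QQ}$-linear) forms in the logarithms $\log n_i$ themselves; it says nothing about $\QQ$-linear relations among the \emph{ratios} $\theta_i = \log 2/\log n_i$, which are not linear in the $\log n_i$. A relation $\sum a_i \theta_i = b$ unwinds to $\log 2 \cdot \sum a_i/\log n_i = b$, and controlling such expressions is exactly the territory of Schanuel's conjecture, which the paper itself flags as unsolved and connected to this circle of ideas. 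So the ``hard part'' you identify is not just hard --- the route you sketch for it does not go through. Fortunately, since the paper uses the nearest-integer convention, none of that machinery is needed; I'd recommend leading with the Dirichlet argument and dropping the floor-function detour entirely.
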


Note that there is a small error in the arXiv preprint version of \cite{ABMPZ}, which does not appear in the published version.  Lemma 3.8 there is a claimed extension of the lemma above to an infinite set of $n_i$, due to to using an incorrect metric.  The correct metric is used for all other claims in the paper, and therefore does not impact any of their main results.  This observation motivated our construction below.

While the main ideas present in the work \cite{ABMPZ} still lead to the framework in the proof of Theorem $\ref{main}$, our new insight is a subtle iteration to carefully track the doubling constants and also add flexibility to defining the measures.  In particular, the sequence of $\alpha$ that is used in the construction can differ on each interval $[\eta - 1, \eta)$ (these intervals no longer need to be linked to the $n$ in the $n$-adic construction in a linear fashion).  Thus, we will be updating $\alpha = \alpha_{\eta}$ for each $\eta$, a key difference from previous works.  Though this may seem like a minor point, it is actually the crux of our argument, and allows one to push the new methods in previous works to their limit.  We begin with the base step of our construction before moving it to different intervals $[\eta - 1, \eta)$

\begin{theorem}\label{thm1}
    Let $0<a<1$ and $1<b<2$ with $a+b=2$ and define $\kappa = b/a >1$.  For any natural numbers $\alpha, M > 1$, 
    there exists infinitely many measures $\mu$ on $[0,1)$ such that
    \begin{itemize}
        \item there exists two adjacent intervals $I_1, I_2$ with $\frac{\mu(I_1)}{\mu(I_2)} = \kappa^\alpha$
        \item $\mu$ is $n$-adic doubling with the constant $1.01 \kappa^3 (2n)^{ \log_2 \kappa}$ for $n \leq M$
        \item if $f$ is the density of $\mu$, then $f$ is bounded above and away from zero, and the ratio $\max f / \min f \leq \kappa^\alpha$.\\
    \end{itemize}
\end{theorem}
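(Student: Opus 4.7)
The plan is to produce the measures $\mu = \mu_x$ as piecewise-constant densities indexed by $x \in \N$, combining the number-theoretic input from Lemma \ref{Lemma4} with a tuned version of the $\alpha$-level weighted cascade from \cite{ABMPZ}. First, I would apply Lemma \ref{Lemma4} to the finite set $\{2, 3, \ldots, M\}$ with the given $\alpha$ and $\epsilon = 2^{-100\alpha}$, obtaining infinitely many $x \in \N$ satisfying $|1/2^x - 1/n^{[x\log_n 2]}| < \epsilon/2^x$ simultaneously for all $2 \leq n \leq M$; each such $x$ will produce a distinct measure, giving the infinite family.

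Fixing an admissible $x$, I would define $f$ to equal $1$ outside a distinguished dyadic interval $D$ of length $2^{-x}$ and to realize the $\alpha$-level $\{a,b\}$-weighted cascade of \cite{ABMPZ} inside $D$, so that $f$ attains the values $b^{\alpha-j} a^{j}$ on cascade sub-pieces with $\max f = b^{\alpha}$ and $\min f = a^{\alpha}$. The normalization $a+b=2$ makes the average of $f$ over every dyadic ancestor of $D$ equal to $1$. The witness pair $I_1, I_2$ would be chosen as two adjacent intervals positioned so that $I_1$ lies in the extremal cascade sub-piece carrying the value $b^{\alpha}$ and $I_2$ lies in the adjacent sub-piece carrying $a^{\alpha}$, arranged so as not to be $2$-adic siblings; this immediately yields $\mu(I_1)/\mu(I_2) = \kappa^{\alpha}$, and the density bound $\max f/\min f = \kappa^{\alpha}$ follows by construction.

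The core technical content is the $n$-adic doubling bound for $2 \leq n \leq M$. I would fix $n$-adic siblings $J_1, J_2$ of length $1/n^{y}$ and split by scale. If $1/n^{y}$ is below the cascade's finest scale $2^{-x-\alpha}$, each $J_i$ lies in a constancy region of $f$ and $\mu(J_1)/\mu(J_2) = 1$. If $1/n^{y}$ is well above $2^{-x}$, both $J_i$ integrate the full cascade (average $1$) together with flat $f=1$ boundary pieces, so the measures are essentially proportional to length. In the intermediate range, Lemma \ref{Lemma4} is crucial: each $J_i$ differs from the closest aligned union of dyadic intervals of length $2^{-x}$ by Lebesgue measure at most $\epsilon/2^x$, so $\mu(J_i)$ equals the aligned dyadic $f$-integral up to a multiplicative factor $1 + O(\epsilon)$, which sits well below $1.01$ for $\epsilon = 2^{-100\alpha}$. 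The ratio of the aligned dyadic integrals for adjacent unions is controlled by counting cascade levels traversed: the $n$-adic scale $1/n^{y}$ spans at most $\lceil \log_2 n \rceil$ dyadic cascade levels, each contributing at most a factor $\kappa$, producing a geometric ratio $\kappa^{\log_2 n} = n^{\log_2 \kappa}$. Tracking the remaining cascade-boundary and normalization constants then yields the claimed bound $1.01 \, \kappa^{3} \, (2n)^{\log_2 \kappa}$.

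The main obstacle is the intermediate-scale bookkeeping: verifying that the $\epsilon$-mismatch between $n$-adic and dyadic boundaries, after interaction with the locally-relevant $f$-values in the cascade, stays within the $1.01$-factor tolerance, and simultaneously checking that the placement of $I_1, I_2$ avoids an $n$-adic-sibling collision for every $n \leq M$ at once. The tight interplay between the $2^{-100\alpha}$-closeness from Lemma \ref{Lemma4} and the multiplicative structure of the $\alpha$-level cascade is exactly what forces the precise constants $1.01$, $\kappa^{3}$, and $(2n)^{\log_2 \kappa}$ in the statement, and is where the \emph{dynamic} choice of $\alpha$ alluded to in the introduction will ultimately plug in when this base step is iterated in the proof of Theorem \ref{main}.
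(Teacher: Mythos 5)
Your overall plan — invoke Lemma \ref{Lemma4} with $\epsilon = 2^{-100\alpha}$, build a piecewise-constant density via an $\{a,b\}$-cascade localized near the distinguished scale $2^{-x}$, and prove $n$-adic doubling by counting how many cascade levels an $n$-adic child can traverse — is the right skeleton and matches the paper in spirit. However, there is a genuine gap in \emph{where} you put the cascade, and it is not a cosmetic one.

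You place the cascade entirely \emph{inside} a distinguished dyadic interval $D$ of length $2^{-x}$, with $f \equiv 1$ outside $D$. But if $I_1, I_2$ are adjacent intervals carrying the extremal averages $b^\alpha$ and $a^\alpha$ inside $D$, then the cascade must be arranged symmetrically about an \emph{interior} dyadic point of $D$, namely $Z/2 = 2^{-x-1}$ (or a deeper one). Lemma \ref{Lemma4} only guarantees that $Z = 2^{-x}$ is $\epsilon$-close to an $n$-adic grid point $Y_n = n^{-p_n}$; it says nothing about $Z/2$. In fact $Z/2 \approx Y_n / 2$, and for odd $n$ the base-$n$ expansion of $1/2$ is nonterminating (e.g.\ $1/2 = 0.\overline{1}$ in base $3$), so $Z/2$ lies a fixed fraction of an interval length away from every $n$-adic grid point at every relevant depth — it sits near the \emph{middle} of the $n$-adic interval containing it. Consequently a parent $J$ of length $\sim 2^{-j}Z$ with $Z/2$ in its interior has a leftmost child seeing density of order $b^{j}$ and a rightmost child seeing density of order $a^{j}$, so the sibling ratio is $\sim \kappa^{j}$, which can be as large as $\kappa^{2\alpha}$. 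That blows up with $\alpha$ and contradicts the claimed $n$-adic constant $1.01\,\kappa^3(2n)^{\log_2\kappa}$.

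The paper avoids this by anchoring the cascade at $Z$ itself: the re-weighting is performed on $[Z/2, 3Z/2)$, symmetric about $Z$, i.e.\ straddling the dyadic boundary between $[0,Z)$ and $[Z,2Z)$. The witness pair $H = [Z - Z/2^{\alpha+1}, Z)$ and $G = [Z, Z + Z/2^{\alpha+1})$ then straddles $Z$, which is the point Lemma \ref{Lemma4} places within $\epsilon$ of $Y_n$ for every $n \le M$. Since $Y_n$ is an $n$-adic grid point, $n$-adic siblings never straddle the cascade center — at worst an $n$-adic interval has $Y_n$ as an endpoint, and then all its children lie on one side of $Z$, traversing only $O(\log_2 n)$ cascade levels, which is what produces the $(2n)^{\log_2\kappa}$. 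A secondary consequence of your placement: a one-sided cascade inside $D$ anchored at an endpoint of $D$ would fix the alignment issue but only yields an adjacent ratio $b^\alpha$ (against the flat region $f\equiv 1$), not $\kappa^\alpha$; so straddling the dyadic boundary is forced by the first bullet of the theorem as well. You should move the re-weighting to $[Z/2, 3Z/2)$ and then run your intermediate-scale bookkeeping relative to the endpoint $Y_n$ rather than relative to aligned dyadic unions.
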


Note that the constant $1.01$ is not important and can be changed to any constant larger than $1$ by adjusting $\epsilon$ appropriately. 

\begin{proof}
To prove this, we will first choose primary construction constants, and then establish a re-weighting scheme to construct $\mu$. Then we show that our measure is $n$-adic doubling with a given constant, by considering all possible cases of intervals.  While we focus on our new steps compared to previous works, we attempt to present a self-contained proof.

Let $\epsilon = 2^{-100\alpha}>0$, and consider a finite set of integers $ \{ 2, 3, \ldots, M \}$. Then from Lemma \ref{Lemma4} above, there exists a natural number $x$ such that
\[ \Bigg| \frac{1}{2^x} - \frac{1}{n^{[x \log_{n} 2]}} \Bigg| < \frac{\epsilon}{2^{x}}, \;\;\;\;  2 \leq n \leq M \]

To simplify, define the following notation:
\begin{itemize}
    \item a sequence $p_n = [x \log_{n} 2]$;
    \item a sequence $Y_n = \frac{1}{n^{p_n}}$;
    \item $Z = \frac{1}{2^x}$
\end{itemize}
We can now re-write the above inequality to be:
\[ \vert Z - Y_n \vert < \epsilon Z.\]
We are ready to construct $\mu$.

\hypertarget{re-weighting}{\textbf{Re-weighting:}}
Given a measure $\mu$ with density $f$, suppose that 
on a dyadic interval $I$, $f$ is constant with value $f_0$.
We say that $I$ splits its weight $a$ to
its left child and $b$ to its right child to mean $f = a f_0$ on
the left child and $f = b f_0$ on the right child.
Since $a+b = 2$, the value of $\mu(I)$ is preserved. 

Start with $\mu$ to be the uniform Lebesgue measure on $[0,1)$. 
We perform $2\alpha$ steps of rew-eighting on $[Z/2,3Z/2)$. 
For step $\beta$, where $\beta \leq \alpha$, the rightmost $\beta$-generation descendant 
 of $[0, Z)$ splits weight $a$ to its left child and $b$ to its right child, and likewise for the leftmost $\beta$-generation descendant of $[Z,2Z)$. We do the same for $\alpha < \beta \leq 2\alpha$ but with $a$ and $b$ swapped, meaning that the rightmost $\beta$-descendant
 of $[0,Z)$ and leftmost $\beta$-descendant of $[Z,2Z)$ split weight $b$ to the left child and $a$ to the right child. See Figure \ref{fig:reweighting}. 
This defines  $\mu$ and $f$, with $f$ bounded above by $b^\alpha$ and below from $a^\alpha$
and $f$ is identically $1$ outside $[Z/2,3Z/2)$. If we take the leftmost and rightmost $\alpha$-th generation descendants, we find such two adjacent intervals $H, G$ such that $\frac{\mu(H)}{\mu(G)} = \kappa^\alpha$.

\begin{figure}[!ht]
    \centering
\begin{tikzpicture}[scale=0.4]
\def\e{0.2} % Height of ticks
\def\f{1.5} % Height of Z's
\def\g{-0.8} % Height of ab's 
\draw (0,0) -- (32,0);
\draw (0,3*\e) -- (0,-3*\e);
\draw (32,3*\e) -- (32,-3*\e);
\draw (16,3*\e) -- (16,-3*\e);
\draw (8,\e) -- (8,-\e);
\draw (12,\e) -- (12,-\e);
\draw (14,\e) -- (14,-\e);
\draw (15,\e) -- (15,-\e);
\draw (24,\e) -- (24,-\e);
\draw (20,\e) -- (20,-\e);
\draw (18,\e) -- (18,-\e);
\draw (17,\e) -- (17,-\e);

\draw (0,\f) node {$Z/2$};
\draw (32,\f) node {$3Z/2$};
\draw (16,\f) node {$Z$};

\draw (4,\g) node {$a$};
\draw (10,\g) node {$ba$};
\draw (13,\g) node {$b^3$};
\draw (12.5,3*\g) node {$b^2ab$};
\draw (15,3*\g) node {$b^2a^2$};
\draw (14.5,\g) -- (13, 2.2*\g);
\draw (15.5,\g) -- (15, 2.2*\g);

\draw (28,\g) node {$a$};
\draw (22,\g) node {$ab$};
\draw (19,\g) node {$a^3$};
\draw (19.5,3*\g) node {$a^2ba$};
\draw (17,3*\g) node {$a^2b^2$};
\draw (17.5,\g) -- (19, 2.2*\g);
\draw (16.5,\g) -- (17, 2.2*\g);
\end{tikzpicture}
    \caption{Density due to re-weighting procedure with $\alpha=2$} 
    \label{fig:reweighting}
\end{figure}
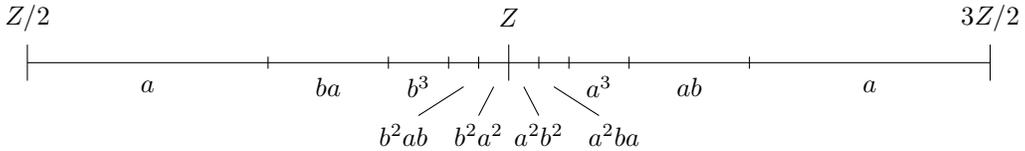

Our re-weighting\hyperlink{re-weighting}{$^{\#}$} splits $[Z/2,3Z/2)$ into $4\alpha + 2$ intervals, where $f$ is constant on each interval. Call these intervals \emph{nice intervals}.
In order, each interval has length 
\[ 2^{-1}Z, 2^{-2}Z, \ldots, 2^{-2\alpha}Z, 2^{-2\alpha}Z, 
\ldots, 2^{-2}Z, 2^{-1}Z. \]  
Moreover, the value of $f$ on the nice intervals are
\[ a, ba, b^2a, \ldots, b^{\alpha-1}a, b^{\alpha+1}, b^{\alpha}ab, b^{\alpha}a^2b, \ldots, b^{\alpha} a^{\alpha-1} b, b^\alpha a^\alpha \]
on the first $2\alpha+1$ nice intervals, and with $a,b$ switched and the list inverted on the next $2\alpha+1$ nice intervals.

\begin{remark}
     Note that on adjacent nice intervals,
the ratio of values of $f$ are
\[ b, \ldots, b, \frac{b^2}{a}, a, \ldots, a, \frac{a}{b}, 1, \frac{b}{a}, b, \ldots, b, \frac{a^2}{b}, a, \ldots, a.\] 

\end{remark}

Thus, on two adjacent intervals the ratio of $f$ is at most $\kappa^2$
(in fact at most $\kappa$ if we ignore $b^2/a$ or $a^2/b$),
and on $t$ adjacent intervals,
$\max f / \min f$ has a ratio of at most $\kappa^{t}$. 
This is a very generous estimate, and the bound is never attained.  See the construction in \cite{AH} for more details on how these values arise.

Following this, we show that $\mu$ is $n$-adic doubling for $n \leq M$ with the appropriate constant. In the trivial case when $n=2$, the above statement holds because $\mu$ is a $2$-adic doubling with constant $\max(b, 1/a) < \kappa$ by construction. 
Now, consider $n > 2$. With that, let $J = (\frac{y}{n^p}, \frac{y+1}{n^p})$ be an $n$-adic interval.  There are a few cases.\\

\textbf{Case 1:} $p < p_n - 1$. In this case, $\vert J \vert > n Y_n$, so any children of $J$ have length at least $n Y_n$, which
means it either contains $(Z/2,3Z/2)$ or is disjoint from it. None of the children 
``sees'' the re-weighting, ao all children have $\mu$-measure equal to their length. \\

\begin{figure}[!ht]
    \centering
    \begin{tikzpicture}[scale=1.3]
\def\e{0.1} % Height of ticks
\def\eps{0.07} % Epsilon
\def\f{0.4} % Height of text
\draw (0,0) -- (9,0);
\draw (0,2*\e) -- (0,-2*\e);
\draw (9,2*\e) -- (9,-2*\e);
\draw (3,2*\e) -- (3,-2*\e);
\draw (6,2*\e) -- (6,-2*\e);
\draw[blue] (3-\eps,2*\e) -- (3-\eps,-2*\e);
\draw[blue] (1.5-0.5*\eps,\eps) -- (4.5-1.5*\eps,\eps);
\draw (1,\e) -- (1,-\e);
\draw (2,\e) -- (2,-\e);
\draw (4,\e) -- (4,-\e);
\draw (5,\e) -- (5,-\e);
\draw (7,\e) -- (7,-\e);
\draw (8,\e) -- (8,-\e);
\draw (0,-\f) node {$0$};
\draw (9,-\f) node {$nY_n$};
\draw (3+\eps,-\f) node {$Y_n$};
\draw (1,-\f) node {$n^{-1}Y_n$};
\draw[blue] (2,0.7*\f) node {Re-weighting};
\draw[blue] (3-\eps,\f) node {$Z$};
\end{tikzpicture}
    \caption{Triadic intervals near $Z$. Note that $Z$ can be on either side of $Y_n$}
    \label{fig:nintervals}
\end{figure}
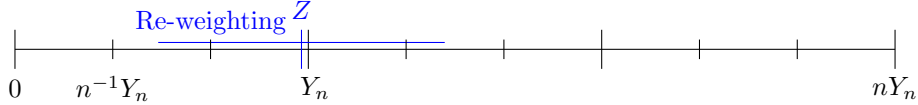

\textbf{Case 2:} $p = p_n - 1$. This case is only non-trivial when $y = 0$, that is, $J = [0,nY_n)$. 
See Figure \ref{fig:nintervals}.
Only two children of $J$ ``see'' the re-weighting: \[ J_1 =  [0,Y_n) \text{ and } J_2 = [Y_n,2Y_n). \] 
Note that $\mu(J_i) = Y_n$ for all $i \geq 3$ (where we abuse notation by using the points $Y_n$ as a value). Noting that $\mu([0,Z)) = Z$, the following is true: 
\begin{align*}
    \vert \mu(J_1) - Y_n \vert &\leq \vert  \mu([0,Z)) - Y_n \vert + \mu([Y_n,Z))  \\
    &= \vert Z - Y_n \vert + \mu([Y_n,Z)) \\
    &\leq \frac{\epsilon}{2^x} + \frac{\epsilon}{2^x}\cdot b^\alpha. 
\end{align*}

The same can be said about $J_2$, where we use $\mu([Z,2Z)) = Z$,
\begin{align*}
    \vert \mu(J_2) - Y_n \vert &\leq \vert \mu([Z,2Z)) - Y_n \vert + \mu([Y_n,Z)] +  \mu([2Y_n,2Z)) \\
    &\leq \frac{\epsilon}{2^x} + \frac{\epsilon}{2^x}\cdot b^\alpha + \frac{2\epsilon}{2^x}.
\end{align*}

Recall that $\epsilon$ is very small (specifically, we defined $\epsilon$ to be $2^{-100\alpha}>0$). 
So, $\epsilon\cdot b^{\alpha}=(2^{-100\alpha})b^{\alpha}\leq 2^{-99}$ must be true for $\alpha\geq 1$. 

For convenience, we bound $\epsilon \cdot b^{\alpha} \leq 0.001$. Then, we have the following:
\[ \frac{\epsilon\cdot b^\alpha}{2^x} \leq 0.001 \cdot Y_n.\]
The same argument holds for both $J_1$ and $J_2$, for which we obtain
\[ \vert \mu(J_1) - Y_n \vert, \vert \mu(J_2) - Y_n \vert \leq 0.001 Y_n \]
which, by another application of the triangle inequality, means that all children of $J$ have measures within a ratio of $1.001^2$. 

\textbf{Case 3:} $p > p_n-1$, or $\vert J \vert \leq Y_n$. 
Now, $J$ cannot cross $Y_n$, and must be on either side of $Y_n$, which is the main reason why this construction works. 
This case will also have two subcases (3.1 and 3.2) to consider. 

\emph{Case 3.1:} $\vert J \vert \leq 2^{-2\alpha}Z$. 
In this case, $\vert J \vert$ is shorter than any nice interval, 
so $J$ intersects at most two nice intervals.
Thus its children's ratios is at most $\kappa^2$. 

\emph{Case 3.2:} $\vert J \vert > 2^{-2\alpha}Z$. This is the only non-trivial case. We further case on whether $J$ has $Y_n$ as an endpoint.

[\emph{3.2.1}] Imagine $J$ does not have $Y_n$ as one of its endpoints. Then $J$ must have at least one sibling between itself and $Y_n$, so the following must hold:
\[ d(J, Z) \geq d(J, Y_n) - d(Y_n, Z) \geq \vert J \vert - 0.1 \vert J \vert = 0.9 \vert J \vert, \]
where $d$ is the Euclidean distance.  Thus, $J$ intersects at most three nice intervals $f$, which results in a factor of still at most $\kappa^3$. 

[\emph{3.2.2}] Now imagine that $J$ has $Y_n$ as one of its endpoints. Without loss of generality, let us suppose that $Y_n$ is its left endpoint. Let the right endpoint of the left-most child of $J$ be $X$.
Let $y$ be the largest such $y$ that satisfies $Z + 2^{-y} < X$ inequality. 

Let the density $f$ on the nice interval $(Z + 2^{-y-1}, Z + 2^{-y})$ be denoted as $f_0$.  Note that this interval is entirely contained in the leftmost child of $J$.  Excluding the leftmost child of $J$, due to the maximality of $y$, this can intersect at most $\log_2(2n) + 2$ nice intervals, which results in a factor of at most $\kappa^{\log_2(2n) + 2}$ (one can refer to the construction in \cite{ABMPZ}, based on the \emph{exhaustion procedure} of \cite{AH} for more details). So on $J$ excluding its leftmost child, we have \[f\leq f_0 \cdot \kappa^{\log_2(2n) + 2}  \] 
Henceforth, for any non-left-most child $J_i$, \[ \mu(J_i) \leq \vert J_i \vert \cdot f_0 \cdot \kappa^2 (2n)^{ \log_2 \kappa} \]

Finally we must address $J$'s left-most child, call it $J_1 = [Y_n, X)$.
We first treat its approximation $[Z,X)$.
Due to our weighting scheme, we know that $\mu$ on $[Z,Z+2^{-y-1})$ has an average value within a factor of $\max(1/a,b) < \kappa$ of the value of $\mu$ on $(Z + 2^{-y-1}, Z + 2^{-y})$, which is $f_0$. Indeed, only on the $y$-th step of the re-weighting do the average values between the two intervals change, 
and all re-weighting after the $y$-th step doesn't change the average value. 
On the other hand, on $[Z + 2^{-y},X)$ the value of $f$ is at most $f_0 \cdot \kappa^{\log_2 (2n) + 2}$, and at least $f_0 / \kappa$.
Therefore, on $[Z,X)$, the average value of $f_0$ is between $f_0 / \kappa$ and $f_0 \cdot \kappa^{\log_2 (2n) + 2}$.  For $[Y_n,X)$, 
by an argument like that in case 2, we use the fact that the $\epsilon$ we chose is very small to argue that 
\begin{equation}
\label{mu measure compare}
     \vert \mu(Y_n, X) - \mu(Z, X) \vert < 0.001 \mu(Y_n, X).
\end{equation} 
Hence, all children of $J$ have measures within a ratio of $1.01 \kappa^3  (2n)^{\log_2 \kappa} $ of each other.

We have assumed that $Y_n$ is the left-endpoint of $J$, but the case with the right endpoint works similarly.
The only difference is looking at $(Z - 2^{-y}, Z - 2^{-y-1})$.
  
To summarize, to prove that $\mu$ is $n$-adic doubling,
we considered a $n$-adic interval $J = (\frac{y}{n^p}, \frac{y+1}{n^p})$ for some $p$, considering possible $p$ to be smaller, equal, and larger than the earlier defined $p_n$.  In all cases, we have shown that the requisite ratio of any two sibling's measure is bounded by  $1.01 \kappa^3  (2n)^{\log_2 \kappa} $.
\end{proof}

\begin{theorem}\label{thm2}
    There exists a measure $\mu$ on $(0,\infty)$ that is $n$-adic doubling for each $n\geq 2$ but not doubling overall.
\end{theorem}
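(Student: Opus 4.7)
The plan is to tile $(0,\infty)$ by the unit intervals $[\eta-1,\eta)$, $\eta\in\mathbb{N}$, and on each tile drop a translated copy of the measure produced by Theorem \ref{thm1}, letting the construction parameters vary with $\eta$. Concretely, choose sequences $\alpha_\eta,M_\eta\in\mathbb{N}$ that both tend to infinity (say $\alpha_\eta=M_\eta=\eta$), let $\mu_\eta$ be a measure on $[0,1)$ granted by Theorem \ref{thm1} with parameters $(\alpha,M)=(\alpha_\eta,M_\eta)$, and define $\mu$ on $(0,\infty)$ by $\mu|_{[\eta-1,\eta)}(A)=\mu_\eta(A-(\eta-1))$. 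Since each $\mu_\eta$ has total mass $1$ and coincides with Lebesgue measure near the endpoints of $[0,1)$ (the re-weighting lives in the interior subinterval $[Z/2,3Z/2)$), this procedure produces a well-defined locally finite positive Radon measure on $(0,\infty)$ for which every unit interval carries mass exactly $1$.

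For the $n$-adic doubling check, fix $n\geq 2$ and a pair of $n$-adic siblings $J_1,J_2$ of length $1/n^m$. If $m\leq 0$ (length $\geq 1$), each sibling is a disjoint union of $n^{-m}$ unit intervals, each with mass $1$, so $\mu(J_1)=\mu(J_2)=n^{-m}$ trivially. If $m\geq 1$, the common $n$-adic parent has length $1/n^{m-1}\leq 1$ and, since integer points are $n$-adic for every $n$, sits inside a single tile $[\eta-1,\eta)$; so do both children. On that tile $\mu$ is a translate of $\mu_\eta$, and whenever $n\leq M_\eta$ Theorem \ref{thm1} immediately supplies the $n$-adic doubling bound $1.01\kappa^3(2n)^{\log_2\kappa}$. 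The set $\{\eta:M_\eta<n\}$ is finite; on each such tile the density of $\mu_\eta$ lies between $a^{\alpha_\eta}$ and $b^{\alpha_\eta}$, so the sibling mass ratio is at most $\kappa^{\alpha_\eta}$. Taking the maximum over the uniform bound and this finite exceptional list yields a single finite $n$-adic doubling constant $C(n)$ for $\mu$.

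Failure of global doubling is then immediate: Theorem \ref{thm1} also gives, for every $\eta$, a pair of adjacent equal-length intervals in $[\eta-1,\eta)$ whose $\mu$-masses are in ratio $\kappa^{\alpha_\eta}$, and since $\alpha_\eta\to\infty$ and $\kappa>1$ these ratios are unbounded. Thus no single constant controls $\mu(G)/\mu(H)$ across all adjacent pairs of equal-length intervals.

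All of the genuine analysis here lives in Theorem \ref{thm1}; the present statement is really a packaging argument, and the only obstacle is bookkeeping. The two demands on the sequences are nearly independent: $M_\eta\to\infty$ is what guarantees that for each fixed $n$ the exceptional tile set is finite and $C(n)$ is therefore finite, while $\alpha_\eta\to\infty$ is what drives the ratios producing non-doubling. Any pair of sequences satisfying both conditions—and $\alpha_\eta=M_\eta=\eta$ is the simplest—makes everything work, and essentially the same recipe will supply the torus version of Theorem \ref{main} after the trivial identifications described in Section 2.
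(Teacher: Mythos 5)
Your argument is correct and follows essentially the same route as the paper: tile $(0,\infty)$ by the unit intervals $[\eta-1,\eta)$, place a translated copy of the Theorem~\ref{thm1} measure on each tile with $M_\eta\to\infty$ and $\alpha_\eta\to\infty$, then split the $n$-adic check according to whether the parent interval lies before or after the (finitely many) tiles with $M_\eta < n$, on which the crude density ratio bound $\kappa^{\alpha_\eta}$ suffices. The only difference is cosmetic: the paper tunes $\alpha_\eta$ so that $\kappa^{\alpha_\eta}=\eta^{\log_2\kappa}$, which keeps $C(n)$ polynomial in $n$ (setting up the sharper control needed later in Theorem~\ref{thm3}), whereas your cruder choice $\alpha_\eta=M_\eta=\eta$ yields an exponentially large but still finite $C(n)$, which is all Theorem~\ref{thm2} requires.
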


\begin{proof}
Consider some $\eta \in \mathbb{N}$. Theorem \ref{thm1} implies that there exists $\mu$ on $[0,1)$ such that doubling constant $C$ is $\kappa^{\alpha}$, and $n$-adic doubling constant $C(n)$ is bounded by $\kappa^{\alpha}$ for $n>\eta$ and $1.01 \kappa^3  (2n)^{\log_2 \kappa} $ for $n\leq \eta$. 

Now choose $0<a<1$ and $b=2-a$ (so that $1<b<2$); and $\kappa = \frac{b}{a}$
such that $3 \kappa^2 \leq 100$, and finally denote $\gamma = \log_2{\kappa}$.  Choose $\kappa = \kappa_\eta$ and $\alpha = \alpha_\eta$ such that $\kappa^\alpha = \eta^\gamma$ on each $[\eta-1, \eta)$ interval, and define  $\mu = \mu_{(\eta)}$ there.  We know that the doubling constant $C$ is $\kappa^{\alpha}$, and while Theorem \ref{thm1} was for a fixed $\eta$, now we are taking a new $\eta$ each time. Since $\alpha$ depends on $\eta$ (specifically, $\alpha_\eta = \log_{\kappa}{\eta^\gamma}$ is increasing), now the doubling constant $C=\kappa^{\alpha_\eta} = \kappa^{\log_{\kappa}{\eta^\gamma}}=\eta^\gamma$ will become unbounded. Concretely, each iteration has more and more iterative steps, so $C=\kappa^{\alpha}$ is not bounded.  This makes $\mu$ not doubling overall. 

Now we will show that $\mu$ is $n$-adic doubling. Fix some $n$, and consider some $n$-adic interval $J$. We will then have a few cases depending on the size of $J$.

\textbf{Case 1:} $\vert J\vert > 1$.

In this case, all of its children have length at least one, which mean all of its children inherit Lebesgue measure.

We only need to consider small parent intervals. When $J$ has length at most $1$, $J$ cannot contain $n$. So either $J \in [0,n)$ or $J \in [n,\infty)$, introducing two cases.\\

\textbf{Case 2:} $J \in [0,n)$ and $\vert J\vert \leq 1$.

In the case when $J \in [0,n)$, we have $\eta <n$. Which means that for $\mu = \mu_{(\eta)}$ on $[\eta-1, \eta)$, so we can use the result from Theorem \ref{thm1} such that \[ C(n) \leq \kappa^{\alpha_\eta},\text{ which also means that } C(n) \leq \kappa^{\alpha_{n}} \leq n^\gamma \] because $\eta$ is bounded by $n$ and $\alpha_\eta$ is an increasing sequence. Hence $\mu$ is $n$-adic doubling with a constant $n^\gamma$. \\

\textbf{Case 3:} $J \in [n,\infty)$ and $\vert J\vert \leq 1$.

In this case we have $\eta \geq n$, and we again can use the result from Theorem \ref{thm1} for $\mu = \mu_{(\eta)}$ on $[\eta-1, \eta)$ such that \[ C(n) \leq 1.01 \kappa^3 (2n)^{ \log_2 \kappa} = 1.01\kappa^3(2n)^\gamma. \]  Hence $\mu$ is guaranteed to be $n$-adic doubling with a constant $100(2n)^\gamma$ due to the assumption on the size of $\kappa$.\\

Combining everything, we can conclude that $\mu$ is $n$-adic doubling but not doubling overall on $(0,\infty)$.\\
\end{proof}

Note that we can actually strengthen the result so that the $n$-adic doubling constant
is sub-linear in $n$. The $1.01\kappa^3(2n)^\gamma$ part already satisfies this. On $[\alpha - 1, \alpha)$, we need not redistribute $\alpha$ steps, but we can instead redistribute $\gamma \log_\kappa \alpha$ steps so that the $f$ ratio is at most $\alpha^\gamma$. 

Finally, let's prove the final step of our main result.

\begin{theorem}\label{thm3}
    Let $f(n) > 1$ be an increasing and unbounded function.
    There exists an infinite family of measures $\mu$ on $[0,1)$ that 
    are $n$-adic doubling with $n$-adic constant at most $f(n)$ for each $n \geq 2$, but not doubling.
\end{theorem}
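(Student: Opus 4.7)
The plan is to tune the iterated construction of Theorem \ref{thm2} adaptively based on $f$. Theorem \ref{thm1} provides a two-parameter family of measures whose $n$-adic doubling constant is bounded by $1.01\kappa^3 (2n)^{\log_2 \kappa}$ while the doubling ratio on certain adjacent intervals equals $\kappa^\alpha$. By pushing $\kappa$ close to $1$ the exponent $\log_2\kappa$ becomes tiny, so $(2n)^{\log_2\kappa}$ grows sub-polynomially slowly in $n$; yet $\kappa^\alpha$ can still be forced to diverge at each successive scale by letting $\alpha$ grow fast enough. This is exactly the flexibility needed to slip under the envelope $f(n)$.

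Concretely, I would first use the remark following Theorem \ref{thm1} to replace the constant $1.01$ by $1+\delta_\eta$ for any prescribed $\delta_\eta>0$, at the cost of shrinking the corresponding $\epsilon_\eta$. Then for each $\eta\geq 2$, I would choose $\delta_\eta\searrow 0$ together with
\[
\log_2 \kappa_\eta \;\leq\; \min_{2 \leq n \leq \eta}\; \frac{\log_2 f(n) - \log_2(1+\delta_\eta)}{3 + \log_2(2n)}, \qquad \alpha_\eta \;=\; \Bigl\lfloor \frac{\log_2 f(\eta)}{\log_2 \kappa_\eta}\Bigr\rfloor.
\]
Since $f(n)>1$ for all $n\geq 2$ and $f(\eta)\to\infty$, these choices can be simultaneously arranged so that $(1+\delta_\eta)\kappa_\eta^{3}(2n)^{\log_2\kappa_\eta}\leq f(n)$ for all $n\leq\eta$, $\kappa_\eta^{\alpha_\eta}\leq f(\eta)$, and $\kappa_\eta^{\alpha_\eta}\to\infty$.

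Next, mirroring the iterated construction in the proof of Theorem \ref{thm2} but now placed inside $[0,1)$, I would perform Theorem \ref{thm1}'s single-scale re-weighting at a sequence of scales $Z_\eta = 2^{-x_\eta}$, each $x_\eta$ supplied by Lemma \ref{Lemma4} for the data $(\kappa_\eta,\alpha_\eta,\eta)$, with $x_\eta$ chosen large enough that the re-weighted windows $[Z_\eta/2,3Z_\eta/2)$ are pairwise disjoint. The same three-case analysis underlying the proof of Theorem \ref{thm2} then shows that for each $n$, contributions from scales $\eta<n$ are at most $\kappa_\eta^{\alpha_\eta}\leq f(\eta)\leq f(n)$, while contributions from $\eta\geq n$ are at most $(1+\delta_\eta)\kappa_\eta^{3}(2n)^{\log_2\kappa_\eta}\leq f(n)$; combining gives $n$-adic doubling with constant at most $f(n)$. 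Non-doubling is immediate because each scale $\eta$ produces adjacent intervals of measure ratio $\kappa_\eta^{\alpha_\eta}$, which diverges.

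The main obstacle is the joint consistency of these constraints: pulling $\kappa_\eta$ close enough to $1$ to respect $f(n)$ at small $n$ (where $f$ may be only slightly above $1$, making the bound on $\log_2\kappa_\eta$ very tight) forces $\alpha_\eta$ to grow very fast so that $\kappa_\eta^{\alpha_\eta}$ still diverges, which in turn forces $\epsilon_\eta=2^{-100\alpha_\eta}$ to be extraordinarily small for Lemma \ref{Lemma4} to apply. Carefully sequencing these cascading dependencies, and verifying that admissible $x_\eta$ can still be chosen to keep the re-weighted windows disjoint, is the most delicate bookkeeping. The infinite family of measures asserted in the conclusion then arises from the infinitely many admissible $x_\eta$ produced by Lemma \ref{Lemma4} at each scale.
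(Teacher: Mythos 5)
Your overall architecture matches the paper's: iterate the single-scale construction of Theorem \ref{thm1} inside $[0,1)$ at a decreasing sequence of scales $Z^{(\eta)}$, send $\kappa_\eta\to 1$ so the $n$-adic constants slip under the envelope $f(n)$, and send $\alpha_\eta\to\infty$ so the overall doubling constant diverges. Your explicit formulas for $\kappa_\eta$ and $\alpha_\eta$ differ from, but are essentially interchangeable with, the paper's choice $1.01\,\kappa_n^{3+\log_2(2\eta_n)}=f(n)$ and $\alpha_\eta=3+\log_2(2\eta)$, and your idea of replacing $1.01$ by $1+\delta_\eta$ is a reasonable cleanup of the paper's passing remark that $1.01$ can be pushed to any constant greater than $1$.

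Where the sketch is too thin is in the separation condition and the resulting case analysis. Mere pairwise disjointness of the windows $[Z^{(\eta)}/2,3Z^{(\eta)}/2)$ is not what the verification needs: even with disjoint windows, a single $n$-adic interval $J$ can have several windows crossing the boundaries between its children, and then the one-redistribution-at-a-time estimate does not obviously hold. The paper instead works with the two-sided $n$-adic neighborhoods $L_n^{(\eta)}=[0,n^{-1}Y_n^{(\eta)})$ and $R_n^{(\eta)}=[0,nY_n^{(\eta)})$ and chooses $Z^{(\eta+1)}$ small enough that $R_n^{(\eta+1)}\subsetneq L_n^{(\eta)}$ simultaneously for every $n\le M_\eta$. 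For each fixed $n$ this yields a strictly descending chain of $n$-adic intervals with common left endpoint $0$ in which each window is isolated between consecutive links, and the case analysis proceeds on the smallest chain element containing $J$ (some $L_n^{(\eta)}$, some $R_n^{(\eta)}$, or $[0,1)$). This is not literally ``the same three-case analysis as Theorem \ref{thm2}'' --- those cases were indexed by which unit interval $[\eta-1,\eta)$ of $(0,\infty)$ contained $J$, a device unavailable once everything lives in $[0,1)$. You flagged the issue as delicate bookkeeping, which is fair, but the specific mechanism that closes the bookkeeping is the $n$-adic nesting $R_n^{(\eta+1)}\subsetneq L_n^{(\eta)}$, not bare disjointness of the re-weighted windows, and your proof would need to introduce exactly that.
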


\begin{proof}
    We do the redistribution entirely within $[0,1)$. 
    For $\eta \in \mathbb{N}$, 
    on step $\eta$, we will 
    redistribute the uniform Lebesgue measure on $[0,1)$
    to that of $\mu^{(\eta)}$, which is the measure 
    obtained from \ref{thm1} with parameters $\kappa_\eta$, $\alpha_\eta$, and $M_\eta$. We will choose the exact values later,
    but for now we assert that $M_\eta$ is increasing in $\eta$. 

    The redistribution occurs on $[Z^{(\eta)}/2, 3Z^{(\eta)}/2)$
    for some $Z^{(\eta)}$. There are infinitely many, arbitrarily
    small possible values of $Z^{(\eta)}$. For all
    $n \leq M_\eta$, there exists an $n$-adic point
    $Y_n^{(\eta)}$ very close to $Z^{(\eta)}$. We may define
    \[ L_n^{(\eta)} = [0, n^{-1} Y_n^{(\eta)}) \text{ and } R_n^{(\eta)} = [0, n Y_n^{(\eta)}). \]
    Note that the redistribution is contained on $R_n^{(\eta)} \setminus L_n^{(\eta)}$. Given $Z^{(\eta)}$, we choose $Z^{(\eta+1)} \leq (M^{(\eta)})^3$.
    This guarantees that the redistributions are sufficiently separated
    in the sense that $R_n^{(\eta+1)}  \subset L_n^{(\eta)}$,
    where the containment is strict. 

    After the measure has been redistributed for all $\eta \in \mathbb{N}$,
    we know that the doubling constant of $\mu$ satisfies 
    \[ C \geq \sup_{\eta \geq 1} (\kappa_\eta)^{\alpha_\eta} \]

    Easily $\mu$ is dyadic doubling with constant $\kappa$. 
    Fix $n \geq 3$. Let $\eta_n$ be the minimum $\eta$
    such that $n \leq M_\eta$, and for all $\eta \geq \eta_n$, define $Y_n^{\eta}$, $L_n^{\eta}$ and $R_n^{\eta}$.
    Note that 
\[ \ldots \subset R_n^{(\eta_n+2)} \subset L_n^{(\eta_n+1)} \subset R_n^{(\eta_n+1)}  \subset L_n^{(\eta_n)} \subset R_n^{(\eta_n)} \subset [0,1) \]
is an infinite strictly descending chain of $n$-adic intervals all with endpoint zero.  

Let $J$ be an arbitrary $n$-adic interval and $J_1, J_2$ arbitrary children of $J$.
Let the minimum element of the chain above that contains $J$ be $I$. 
Note that neither of $J_1, J_2$ can ``see'' any redistribution
due to a smaller element in the chain, since all containments are strict. 

If $I$ is some $L_n^{(\eta)}$
then $J$ sees no redistribution, so the ratio of $\mu(J_1),\mu(J_2)$ is $1$.
If $I$ is $[0,1)$, then $J$ can see $\mu^{(\eta)}$ for all
$\eta < \eta_n$, so
the doubling constant is at most
\[ \sup_{\eta < \eta_n} (\kappa_\eta)^{\alpha_\eta}. \]
Finally, if $I$ is some $R_n^{(\eta)}$, then 
$\mu = \mu^{(\eta)}$ on $J$ for some $\eta \geq \eta_n$, and the doubling
constant is at most 
\[ \sup_{\eta \geq \eta_n} 1.01 (\kappa_\eta)^{3 + \log_2 2n}\]
Hence, we have that the $n$-adic doubling constant satifsies
\[ C(n) \leq \max \left( \sup_{\eta \geq \eta_n} 1.01 (\kappa_\eta)^{3 + \log_2 2n},
\sup_{\eta < \eta_n} (\kappa_\eta)^{\alpha_\eta} \right) := \max(A_n, B_n) \]

Finally we pick $\alpha_n, \kappa_n$, and $\eta_n$. Let $\kappa_n > 1$ be such that 
\[ 1.01 (\kappa_n)^{3 + \log_2 2 \eta_n} = f(n). \]
Note that we have used $f(\eta) > 1.01$ (but we can improve the constant $1.01$ to any constant $> 1)$. 
Without loss of generality, we can assume that $f$ is sufficiently slow growing so that $\kappa_\eta$ is decreasing.
So 
\[ A_n = \sup_{\eta \geq \eta_n} 1.01 (\kappa_\eta)^{3 + \log_2 2n} \leq 1.01 (\kappa_n)^{3 + \log_2 2n} = f(n). \]

Let us next take $\alpha_\eta = 3 + \log_2 (2\eta)$. Therefore, 
$(\kappa_\eta)^{\alpha_\eta} = f(\eta) / 1.01$, and $C = \sup_{\eta \geq 1}
f(\eta) = \infty$. 

Finally, define $\eta_n$ such that 
\[ B_n = \sup_{\eta < \eta_n} (\kappa_\eta)^{\alpha_\eta}
\leq \sup_{\eta < \eta_n}  (\kappa_1)^{3 + \log_2(2\eta)}
\leq (\kappa_1)^{3 + \log_2(2\eta_n)} \leq f(n). \]
Since $f(n)$ is increasing, $\eta_n$ is unbounded and $M_\eta$ is thus well-defined,
we have that $B_n \leq f(n)$. 
We have therefore shown that $\mu$ is $n$-adic doubling with constant $f(n)$, yet not doubling overall.    
\end{proof}

    \section{Applications to weight and function classes}
We begin with a few definitions and conclude with the proofs of all the claims in Theorem \ref{app}.
\subsection{Definitions}

The measures we have defined automatically also define an associated weight.  We begin with defining some distinguished weight classes: reverse H\"older weights $RH_r$ and Muckenhoupt weights $A_r$.
\begin{definition}
     Let $1 < r < \infty$. Given a \emph{weight} (a non-negative, locally integrable) $w$, we say $w \in RH_r$, the $r$-reverse Hölder class, if
     \[ (\fint_I w^r)^\frac{1}{r}\leq C\fint_I w. \] 
     for all intervals $I$, where $C$ is an absolute constant that may
     depend on $w$ but not on the interval $I$. Moreover, we say $w \in RH_1$ if $w\in RH_r$ for some $r>1$; that is 
\[ RH_1=\bigcup_{r>1}RH_r. \]
    \end{definition}

    \begin{definition}
    Let $1 < r < \infty$.  Given a weight $w$, we say a weight $w\in A_r$, the Muckenhoupt $A_r$ class if 
    \[ \sup_I (\fint_I\;w(x)dx)(\fint_I w(x)^\frac{-1}{r-1}dx)^{r-1}<\infty. \]
    where the supremum is taken over all intervals $I$. Moreover we say $w \in A_{\infty}$ if $w \in A_r$ for some $r>1$; that is
\[ A_1=\bigcup_{r>1}A_r. \]
    \end{definition}

Recall the definitions of the space of functions of bounded mean oscillation $BMO$:

    \begin{definition}
            We say a function is in $BMO$ if and only if \begin{align*}
        ||f||_{BMO} \coloneqq \sup_I \fint_I |f - \fint_I f|< \infty. 
    \end{align*}
    \end{definition}

Next we discuss the space of \emph{vanishing mean oscillation}, or $VMO$.  Essentially these functions are ones who approach their averages over small and large intervals.  As such, there are slightly different definitions for the space over the real line and the torus.  We recall these below, but will actually only use the important duality relationships to pass from $BMO$ to $VMO$ via the Hardy space $H^1$.

\begin{definition}
    A function $f$ in $BMO$ is said to be in $VMO$ if  it satisfies the following conditions:
    \begin{enumerate}
        \item $\lim_{\delta \rightarrow 0} \sup_{I: |I|< \delta}\fint_I |f-\fint_If| = 0$;
        \item $\lim_{N \rightarrow \infty} \sup_{I: |I|> N}\fint_I |f-\fint_If| = 0$; and 
        \item $\lim_{R \rightarrow \infty} \sup_{I: I\cap B(0,R) = \emptyset}\fint_I |f-\fint_I f| = 0$.
    \end{enumerate}
    where $B(0,R)$ is the interval centered at zero of radius $R$.
\end{definition}
The definition on the torus is the same, but without the second and third conditions.

\begin{definition}
    The Hardy space $H^1$ as the dual of $VMO$.  Additionally, the predual of $H^1$ is $BMO$.  That is:
    \begin{enumerate}
        \item $H^{1*} = BMO$
        \item $VMO^* = H^1$.
    \end{enumerate}
\end{definition}
Originally $H^1$ was defined by other means and the duality relationships were shown by Fefferman-Stein \cite{FS} and Coifman-Weiss \cite{CW}.  But for our purposes, this definition will be useful.

    We extend all definitions above in the obvious way to 
    an $n$-adic system, by only considering $n$-adic intervals,
    to obtain spaces $RH_n^r$, $A^n_r$, $BMO_n$, $H^1_n$ and $VMO_n$, with one minor caveat.  Since extending the definition of $RH^r$ to $RH_n^r$ in the natural way can create a weight that may not be $n$-adic doubling, we additionally require $RH_n^r$ weights to be $n$-adic doubling.  Also note that the dual of $VMO_n$ is $H^1_n$ and the dual of $H^1_n$ is $BMO_n$. 

\subsection{Proof of Theorem \ref{app}}
    Now we will utilize the measure we constructed in the previous
    section to show the first claim in Theorem \ref{app}, via focusing on the weight $f$ defined by our measure $\mu$, that is,
\[
\mu(I)=\int_I f_\mu dx \quad \textrm{for any interval} \ I. 
\]
  
\begin{lemma}
\label{RH aux lemma}
    Let $f : \Omega \rightarrow [0,\infty)$ be integrable, let $C(I)$ denote
    the reverse Holder constant of $f$ on $I$. Suppose that $I_1,\ldots,I_m$ are disjoint with $I = \bigcup_{n=1}^m I_n$, 
    then 
    \[ C(I) \leq m^{1/r} \max_n C(I_n) \left( \frac{\vert I \vert}{\vert I_n \vert} \right)^{1-1/r}
    \leq \max_n C(I_n) \cdot \max_n \frac{\vert I \vert}{\vert I_n \vert}. \] 
    If however the averages of $f$ on all $I_n$ are equal, then
    \[ C(I) \leq \max_n C(I_n). \]
\end{lemma}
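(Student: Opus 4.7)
The plan is to unfold both averages $\fint_I f^r$ and $\fint_I f$ as sums of contributions from the pieces $I_n$, apply the reverse H\"older inequality on each $I_n$ separately, and then recombine via elementary $\ell^r$-versus-$\ell^1$ comparisons. The key algebraic identity I will rely on is the rewriting
\[ \frac{(\fint_I f^r)^{1/r}}{\fint_I f} \;=\; |I|^{1-1/r} \cdot \frac{(\int_I f^r)^{1/r}}{\int_I f}, \]
which converts the problem about normalized averages into one about unnormalized integrals, where the contributions from the $I_n$'s simply add.

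Setting $a_n := \int_{I_n} f$ so that $\int_I f = \sum_n a_n$, the definition of $C(I_n)$ gives
\[ \int_{I_n} f^r \;\leq\; C(I_n)^r |I_n| \left(\fint_{I_n} f\right)^r \;=\; C(I_n)^r \frac{a_n^r}{|I_n|^{r-1}}. \]
Summing in $n$, taking $r$-th roots, and substituting into the display above reduces the problem to bounding
\[ C(I) \;\leq\; \frac{\left(\sum_n C(I_n)^r a_n^r (|I|/|I_n|)^{r-1}\right)^{1/r}}{\sum_n a_n}. \]
For the first claimed bound I will apply $(\sum_n x_n^r)^{1/r} \leq m^{1/r} \max_n x_n$ with $x_n = C(I_n) a_n (|I|/|I_n|)^{1-1/r}$, then split $\max_n (y_n a_n) \leq (\max_n y_n)(\max_n a_n) \leq (\max_n y_n) \sum_n a_n$ to extract $\sum a_n$ and cancel the denominator. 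For the second claimed bound, the observation $|I| = \sum_n |I_n| \geq m \min_n |I_n|$ gives $m \leq \max_n (|I|/|I_n|)$, so the $m^{1/r}$ factor can be absorbed into $(|I|/|I_n|)^{1-1/r}$, producing the clean exponent $1$ on the size ratio.

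For the equal-averages case, each $\fint_{I_n} f$ equals the common value $A = \fint_I f$, whence
\[ \int_I f^r \;=\; \sum_n \int_{I_n} f^r \;\leq\; A^r \sum_n C(I_n)^r |I_n| \;\leq\; A^r \max_n C(I_n)^r \cdot |I|, \]
so $(\fint_I f^r)^{1/r} \leq A \cdot \max_n C(I_n) = (\fint_I f) \cdot \max_n C(I_n)$, giving $C(I) \leq \max_n C(I_n)$ directly. The proof has no real obstacle; it is essentially bookkeeping. The only nuance is that Jensen's inequality for $x \mapsto x^r$ moves in the wrong direction for our purposes, which is why the crude $\ell^r \hookrightarrow \ell^\infty$ bound $(\sum x_n^r)^{1/r} \leq m^{1/r} \max x_n$ is needed in the first inequality, and why the size-ratio control $m \leq \max_n(|I|/|I_n|)$ is needed to pass to the second.
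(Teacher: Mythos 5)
Your proof is correct and takes essentially the same route as the paper: decompose $\int_I f^r$ into contributions from the pieces $I_n$, apply the reverse H\"older inequality on each piece, and recombine with the crude $\ell^r$-to-$\ell^\infty$ bound, using $m \leq \max_n(|I|/|I_n|)$ to pass to the final form. The only cosmetic difference is bookkeeping order: the paper bounds $\int_{I_n} f \leq \int_I f$ inside the sum before taking $r$-th roots, whereas you retain the $a_n = \int_{I_n} f$ until the end and then use $\max_n a_n \leq \sum_n a_n$ to cancel the denominator; these are equivalent.
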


\begin{proof}
    \begin{align*}
        \fint_I f_n^r dx &= \sum_n \frac{\vert I_n \vert}{\vert I \vert} \fint_{I_n} f_n^r dx \\
        &\leq   \sum_n \frac{\vert I_n \vert}{\vert I \vert} C_n^r \left( \fint_{I_n} f dx \right)^r \\
        &= \sum_n \frac{\vert I_n \vert^{1-r}}{\vert I \vert}  C_n^r \left( \int_{I_n} f dx \right)^r \\
        &\leq \sum_n \frac{\vert I_n \vert^{1-r}}{\vert I \vert}  C_n^r \left( \int_{I} f dx \right)^r \\
        &= \sum_n \frac{\vert I_n \vert^{1-r}}{\vert I \vert^{1-r}}  C_n^r \left( \fint_{I} f dx \right)^r \\
        &\leq m \max_n C_n^r \left(  \frac{\vert I \vert}{\vert I_n \vert} \right)^{r-1} \left( \fint_{I} f dx \right)^r 
    \end{align*}
    Using the fact that 
    \[
     m^{1/r} \leq \max_n \bigg(\frac{\vert I \vert}{\vert I_n \vert}\bigg)^{1/r}
    \]
    finishes the claim.
    
    We now show the second claim. If $\fint_{I_n} f = A$ for all $I_n$, then
    \[ \fint_{I_n} f^r \leq C(I_n)^r A^r \text{ if and only if} 
    \int_{I_n} f^r \leq C(I_n) A^r \vert I_n \vert. \]
    Thus, 
    \[ \sum_{i=1}^n \int_{I_n} f^r \leq C(I_n) A^r \vert I \vert \text{ if and only if} \fint_{I} f^r \leq \max C(I_n)^r A^r, \]
    as desired. 
\end{proof}

\begin{theorem}
\label{RH thm}
     Let $1 \leq r < \infty$. There exists a weight $f : [0,1) \rightarrow (0,\infty)$ such that $f \in RH_n^r$
     for all $n \geq 2$ but $f \notin RH^r$. 
\end{theorem}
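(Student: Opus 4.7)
The plan is to take $f = \frac{d\mu}{d\mathcal{L}}$ where $\mu$ is the measure produced by Theorem \ref{thm3}: $\mu$ lives on $[0,1)$, has a strictly positive density, is $n$-adic doubling for every $n \geq 2$, and fails to be doubling overall. (The parameters $\kappa_\eta$, $\alpha_\eta$ and the scales $Z^{(\eta)}$ in the construction can be tuned so that the $L^r$-integrals of $f$ converge, without altering the conclusions of Theorem \ref{thm3}.) The failure $f \notin RH_r$ will then be immediate from the classical fact that any $RH_r$ weight lies in $A_\infty$ and so induces a doubling measure: since $\mu = f\,d\mathcal{L}$ is by construction non-doubling, $f$ cannot lie in $RH_r$. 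Alternatively, the non-doubling witnesses $(G_k, H_k)$ of adjacent equal-length intervals with $\mu(G_k)/\mu(H_k) \to \infty$ can be combined with H\"older's inequality on $H_k$ and the putative reverse H\"older inequality on $G_k \cup H_k$ to force a contradiction as $k \to \infty$.

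To show $f \in RH_n^r$ for each fixed $n \geq 2$, I will first observe that the $n$-adic doubling of $f$ is already established by Theorem \ref{thm3}, so only the reverse H\"older estimate on $n$-adic intervals remains. For an arbitrary $n$-adic $J$, I will perform a descending induction on the $n$-adic tree using Lemma \ref{RH aux lemma}, exploiting the strictly descending chain
\[
\cdots \subset R_n^{(\eta_n+1)} \subset L_n^{(\eta_n)} \subset R_n^{(\eta_n)} \subset [0,1)
\]
of $n$-adic intervals from Theorem \ref{thm3}, whose shells $R_n^{(\eta)} \setminus L_n^{(\eta)}$ support the successive redistributions. Two structural observations drive the argument: first, on any $n$-adic sub-interval small enough to lie inside a single nice interval of one redistribution (or entirely outside all redistribution zones), $f$ is constant, giving the base case $C=1$; second, at the critical scales where the descent crosses a landmark $Y_n^{(\eta)}$, Case 2 of Theorem \ref{thm1} guarantees that the $n$-adic children of the corresponding parent have $\mu$-measures within a factor arbitrarily close to $1$ of one another, so the \emph{equal averages} branch of Lemma \ref{RH aux lemma} applies with negligible multiplicative loss. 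At generic levels the children's averages differ by at most the $n$-adic doubling constant, and the general branch of the lemma contributes only a factor depending on $n$.

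The main obstacle will be showing that this descending induction terminates with a bound $C(J) \leq C_n$ uniform in $J$. Because the chain above is strictly descending with $\bigcap_\eta L_n^{(\eta)} = \{0\}$, any given $n$-adic $J$ meets only a bounded (in $n$) number of redistribution shells at scales comparable to $|J|$; the remaining, deeper redistributions all sit inside a single $n$-adic child of $J$, and their total contribution to both $\fint_J f$ and $\fint_J f^r$ is controlled by the rapid decay of the scales $Z^{(\eta)}$ together with the measure-preservation of each re-weighting. A careful inductive accounting using Lemma \ref{RH aux lemma} then yields the uniform bound and establishes $f \in RH_n^r$ for every $n \geq 2$.
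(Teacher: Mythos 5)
Your high-level plan---take $f$ to be the density from the construction behind Theorem \ref{thm3}, deduce $f \notin RH^r$ from non-doubling (every $RH_r$ weight lies in $A_\infty$ and is therefore doubling), and then bound the $n$-adic reverse H\"older constant using the descending chain of $n$-adic intervals $\cdots \subset L_n^{(\eta)} \subset R_n^{(\eta)} \subset [0,1)$ together with Lemma \ref{RH aux lemma}---is essentially the right scaffolding and matches the paper's organization. But there is a genuine gap precisely where the real work has to happen: the case where an $n$-adic interval $J$ lies inside a single redistribution shell $R_n^{(\eta)}\setminus L_n^{(\eta)}$, has $Y_n^{(\eta)}$ as an endpoint, and is \emph{larger} than the smallest nice interval (length $>2^{-2\alpha}Z^{(\eta)}$). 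Your proposal dispatches the small-$J$ situation as a ``base case $C=1$'' when $J$ fits inside one nice interval, and invokes the equal-averages branch of the lemma at landmark scales; neither of these touches the hard intermediate regime, where $J$ spans many nice intervals, the density $f$ on $J$ ranges over values as large as $b^\alpha f_0$, and the ratio $\fint_J f^r\big/(\fint_J f)^r$ must nonetheless be bounded independently of $\alpha$ (hence of $\eta$, hence of $J$).

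The crucial ingredient that makes this work---and that your proposal does not identify---is the explicit choice $b^r<2$. The paper uses the geometric structure of the re-weighting to write
\[
\fint_Z^{Z+2^{-y-1}} f^r\,dx \ \le\ \frac{b^r\kappa^r}{2}f_0^r + \frac{b^{2r}\kappa^r}{4}f_0^r + \cdots \ \le\ \frac{b^r\kappa^r}{2-b^r}\,f_0^r,
\]
and this sum diverges unless $b^r<2$. Your remark that ``the parameters can be tuned so that the $L^r$-integrals of $f$ converge'' gestures at this but leaves it entirely open; without naming the constraint $b^r<2$ and carrying out the three-term decomposition of $\fint_J f^r$ (the sliver $[Y_n,Z)$, the geometric tail $[Z,Z+2^{-y-1})$, and the piece $[Z+2^{-y-1},X)$), there is no control on $C(J)$ in this case, and the descending induction cannot close. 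In particular, Lemma \ref{RH aux lemma} with unequal averages loses a factor of $|I|/|I_n|$ at each application, so a naive tree recursion accumulates an unbounded factor: the lemma is only useful when the subintervals have equal $f$-averages (which is why the paper partitions $[0,1)$ into shells $S_j$ on which $f$ averages exactly to $1$), or when there are only boundedly many pieces; neither of these situations produces the needed bound inside the shell without the $b^r<2$ computation. The remaining cases of your outline (intervals on which $f\equiv 1$, intervals sandwiched between chain elements, etc.) are fine and align with the paper's Cases 2--6; the missing case 1 is where the theorem is actually proved.
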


\begin{proof}
    
The basic idea of the proof has similarities to applications proved in \cite{AH}, but the intricacies involved in constructing our measure necessitate some care.  Fix $r > 1$. We use the density $f$ of the measure from Theorem \ref{main} with the property that $C(n) \leq 1.01 \kappa^3 (2n)^{ \log_2 \kappa}$. 
Additionally, we pick $\kappa_1$ such that $(b_1)^r < 2$.
Since $f$ is not doubling, necessarily $f \notin RH^r$ for any $r \geq 1$. We then prove $f \in RH_n^r$
for all $n \geq 2$.  Note that we have already proved the $n$-adic doubling part of this assertion.

Recall that
\[ \ldots \subset R_n^{(\eta_n+2)} \subset L_n^{(\eta_n+1)} \subset R_n^{(\eta_n+1)}  \subset L_n^{(\eta_n)} \subset R_n^{(\eta_n)} \subset [0,1) \]
is an infinite strictly descending chain of $n$-adic intervals all with endpoint zero.  

Define
\[ S_0 = [0,1) \setminus R_n^{\eta_n}, 
    S_{2k+1} =  R_n^{(\eta_n + k)} \setminus L_n^{(\eta_n + k)}, 
     S_{2k+2} =  L_n^{(\eta_n + k)} \setminus R_n^{(\eta_n + k + 1)}. \]
Thus, $S_j$ is a the sequence of successive differences of the infinite descending chain,
and $S_j$ partitions $[0,1)$. 
More importantly, the average of $f$ is $1$ on all of these intervals, and $f$ is identically $1$
on $S_{2k}$ for $k \geq 1$. 

Consider an arbitrary $n$-adic interval $J$. Define $C(J)$ to be the $RH_n^r$ constant of $J$. 
We wish to show that $C(J)$ is bounded. There are two cases. Either $J$ is contained in some $S_k$ or it
has left endpoint $0$ and is sandwiched between two intervals of the infinite descending chain.
We thus have six total cases.  We list the first three.

\begin{enumerate}
    \item $J \subseteq S_{2k+1}$ for $k \geq 0$. We claim that $C(J) \leq C$ for some constant $C$
    that only depends on $b$ and $r$. We will temporarily assume this and prove this later.
    \item $J \subseteq S_{2k+2}$ for $k \geq 0$. Here $C(J) = 1$.
    \item $J \subseteq S_0$. Since the ratio of possible values of $f$ on $S_0$ is bounded 
    by $1.01 \kappa^3 (2n)^{ \log_2 \kappa}$, we can say $C(J) \leq 1.01 \kappa^3 (2n)^{ \log_2 \kappa}$.
\end{enumerate}

Note that the reverse H\"older constant can also be defined for a union of $n$-adic intervals,
in particular, $S_{j}$. Note that $C(S_{0}) \leq 100 n$, $C(S_{2k+2}) = 1$,
and since $S_{2k+1}$ can be written as a union of $2n-2$ many $n$-adic intervals,
each of constant $C$, and length at least $\vert S_{2k+1} \vert / n^2$, by the lemma we 
can conclude that $C(S_{2k+1}) \leq C n^2$. 

We now list the last three cases, where $J$ is sandwiched between two intervals of the chain.

\begin{enumerate}
\setcounter{enumi}{3}
    \item $R_n^{(\eta_n + k + 1)} \subseteq J \subseteq L_n^{(\eta_n + k)}$. Partition $J$ as 
    \[ J = (J \setminus R_n^{(\eta_n + k + 1)} ) \cup \bigcup_{j=2k+3}^\infty S_{j}. \]
    Since $f$ averages to $1$ on each of these intervals, and the reverse H\"older constant is at most $C n^2$
    for these intervals, we can conclude by the lemma that $C(J) \leq C n^2$. 
    \item $L_n^{(\eta_n + k)} \subset J \subset R_n^{(\eta_n + k)}$ for some $k \geq 0$. Note that 
    necessarily $J = [0,Y_n^{(\eta_n+k)})$. Write $J$ as a union of $n$ many $n$-adic intervals (its children).
    Since each of these intervals have constant at most $Cn^2$, and each interval has length at least $\vert J \vert / n$,
    by the lemma we can conclude that $C(J) \leq C n^3$. 
    \item $R_n^{(\eta_n)} \subseteq J \subseteq [0,1)$. Write $J = (J \setminus R_n^{(\eta_n)}) \cup R_n^{(\eta_n)}$. 
    We have
    \[ C(J \setminus R_n^{(\eta_n)}) \leq 100 n \text{ and } C(R_n^{(\eta_n)}) \leq C n^2. \]
    Additionally, $\vert R_n^{(\eta_n)} \vert \leq \vert J \setminus R_n^{(\eta_n)} \vert$, so 
    \[ C(J) \leq \frac{\max (1.01 \kappa^3 (2n)^{ \log_2 \kappa}, Cn^2)}{\vert R_n^{(\eta_n)} \vert}.  \]
    Note that $\vert R_n^{(\eta_n)} \vert$ is a fixed constant. 
\end{enumerate}
 
Thus, assuming that $C(J) \leq C$ for all intervals $J$ in the first case, 
we concluded that the reverse H\"older constant of intervals of all cases is bounded.
We now treat the first case and we will drop $\eta$ since $J \subseteq R_n^{(\eta)} \setminus L_n^{(\eta)}$ for some $\eta$. Recall Figure \ref{fig:nintervals2}. 

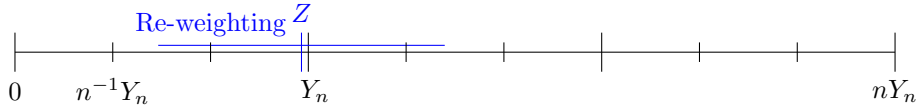
\begin{figure}[!ht]
    \centering
        \begin{tikzpicture}[scale=1.3]
\def\e{0.1} % Height of ticks
\def\eps{0.07} % Epsilon
\def\f{0.4} % Height of text
\draw (0,0) -- (9,0);
\draw (0,2*\e) -- (0,-2*\e);
\draw (9,2*\e) -- (9,-2*\e);
\draw (3,2*\e) -- (3,-2*\e);
\draw (6,2*\e) -- (6,-2*\e);
\draw[blue] (3-\eps,2*\e) -- (3-\eps,-2*\e);
\draw[blue] (1.5-0.5*\eps,\eps) -- (4.5-1.5*\eps,\eps);
\draw (1,\e) -- (1,-\e);
\draw (2,\e) -- (2,-\e);
\draw (4,\e) -- (4,-\e);
\draw (5,\e) -- (5,-\e);
\draw (7,\e) -- (7,-\e);
\draw (8,\e) -- (8,-\e);
\draw (0,-\f) node {$0$};
\draw (9,-\f) node {$nY_n$};
\draw (3+\eps,-\f) node {$Y_n$};
\draw (1,-\f) node {$n^{-1}Y_n$};
\draw[blue] (2,0.7*\f) node {Re-weighting};
\draw[blue] (3-\eps,\f) node {$Z$};
\end{tikzpicture}
    \caption{$n$-adic intervals near $Z$ where $n=3$. Note that $Z$ can be on either side of $Y_n$}
    \label{fig:nintervals2}
\end{figure}

If $\vert J \vert \leq 2^{-2\alpha} Z$, then $J$ crosses at most one value change of $f$,
so $C(J) \leq \kappa$.  Thus we assume $\vert J \vert > 2^{-2\alpha} Z$. 
If $J$ does not have $Y_n$ as an endpoint, then $J$ crosses at most two changes of values of $f$, so $C(J) \leq \kappa^2$.

Now let us assume finally that both $\vert J \vert > 2^{-2\alpha} Z$ and $J$ has $Y_n$ as an endpoint. 
Once again, without loss of generality, $J$ lies to the right of $Y_n$. It will be clear our proof also works when $J$ lies on the left.  We are now in the setup as in Case 3.2.2 in Theorem \ref{thm1}.  Let $X$ be the right endpoint of the leftmost child of $J$.

Let $y$ be the smallest step such that $Z + 2^{-y} \geq X$.
Suppose that $f = f_0$ on $[Z +  2^{-y-1}, Z +  2^{-y})$. On this interval, $f_0$
can be anything between $a^\alpha$ and $b^\alpha$. 
By construction, we have 
\[ a f_0 \leq \fint_Z^X f dx \leq b f_0. \]

By \eqref{mu measure compare}, we have that $\mu(Y_n,Z) \leq 0.001\mu(Y_n,X)\leq 0.01\mu(X,Z)$.  Thus also $|Y_n-Z| \leq 0.01|X-Z|$ and $0.99|J| = 0.99(|Y_n-Z|+|X-Z|) \leq |X-Z|$, and we can extend the average integral over $(X,Z)$ to $J$ within a factor of 0.01.  That is,
\[ 0.99 a f_0 \leq \fint_J f dx \leq 1.01 b f_0. \]
We will use this lower bound shortly.  Now let us focus our attention on $[Z,Z + 2^{-y-1})$. 
By definition of $f_0$, regardless of where we are
in our weighting scheme we know that 
$f$ is at most $\kappa b f_0$ on $[Z + 2^{-y-2}, Z +  2^{-y-1})$,
at most $\kappa b^2 f_0$ on $[Z + 2^{-y-3},Z + 2^{-y-2})$,
and so on. Thus,
\[ \fint_Z^{Z +  1/2^{y+1}} f^r dx 
   \leq \frac{b^r \kappa^r}{2} f_0^r +  \frac{b^{2r} \kappa^r}{4} f_0^r  + \frac{b^{3r} \kappa^r}{8} f_0^r  + \ldots
   \leq \frac{b^r \kappa^r}{2-b^r} f_0^r 
   \]
Here it is important that $b^r < 2$. Thus, by the triangle inequality
\begin{align*}
    \fint_J f^r dx &\leq \frac{\vert Y_n - Z \vert}{\vert J \vert} \vert \fint_{Y_n}^Z f^r dx \vert \\
&+  \frac{\vert 1/2^{y+1} \vert}{\vert J \vert}
\fint_Z^{Z +  1/2^{y+1}} f^r dx \\
&+\frac{\vert X - Z - 1/2^{y+1} \vert }{\vert J \vert} \fint_{Z +  1/2^{y+1}}^{X} f^r dx
\end{align*}

We have previously upper bounded the second average integral.
The third average integral has value exactly $f_0^r$. 
Moreover, the coefficients of the second and third integral are no greater than $1.001$ and $0.501$ respectively.
To see this, note that 
\[ \frac{1}{2^{y+1}} \leq \vert X - Z \vert \leq 1.001 \vert J \vert
\text{ and } \vert X - Z - \frac{1}{2^{y+1}} \vert \leq 0.5 \vert X - Z \vert \leq 0.501 \vert J \vert \]
where we have used that $y$ is maximal. 

The first average integral 
has value $(a^\alpha b^\alpha)^r \leq a^{\alpha r} (b^r)^\alpha \leq f_0^r 2^\alpha$. But the coefficient in front of it is small as can be seen by
\[ \frac{\vert Y_n - Z \vert}{\vert J \vert} \vert \fint_{Y_n}^Z f^r dx \vert 
\leq \epsilon \frac{Z}{\vert J \vert} \cdot f_0^r 2^\alpha \leq \frac{2^{-100\alpha}}{2^{-2\alpha}}  f_0^r 2^\alpha \leq 0.001 f_0^r, \]

where we have used the fact that $\vert J \vert > 2^{-2\alpha} Z$ in this case.
Putting the three terms together, we obtain
\[ \fint_J f^r dx \leq 0.001 f_0^r + 1.001 \cdot \frac{b^r \kappa^r}{2-b^r} f_0^r + 0.501 f_0^r. \]

Therefore, we can conclude that
\[ C(J) \leq C := \max ( \left( \frac{\frac{1.001b^r \kappa^r}{(2-b^r)} + 1}{0.99a} \right)^{1/r} , \kappa^2) \]
which is independent of both $J$ and $n$, and proves our claim.
\end{proof}

The proof of the rest of the claims in Theorem \ref{app} mostly follow from a similar argument as the corresponding theorem in \cite{ABMPZ}.  This argument starts with a key example and weaves it through the weight and function classes $A_r$, $BMO$, $H^1$ and $VMO$ using functional analysis and deep theorems such as duality.  We include the short, self-contained argument of the other assertions of Theorem \ref{app}.
\begin{proof}
    
    Let $f$ be the density of $\mu$ constructed
    for \ref{main}. Depending on the $r$ of our interest,
    we pick $\kappa_\eta$ not too large 
    so that $b^r < 2$ for all $\eta$ steps. Therefore, $f \in RH_n^r$ for all $n$.
    However, $f$ is not doubling, so it cannot 
    be in $RH^r$ for any $r > 1$. 
    The case for $A_n^r$ and $A^r$ is the same,
    but one needs to interchange the roles of $a$ and $b$, and replace $r$ with $\frac{-1}{r-1}$.  See \cite{AH} for details.  If we want the weaker conclusion about $A_\infty$, this is implied automatically by the reverse H\"older results, and can be used to imply the next three results on $BMO$, $H^1$, and $VMO$, as we detail below.

    It is a known fact that if $f \in A_\infty^n$, then $\log \vert f \vert \in BMO_n$.  Hence, $\log \vert f \vert \in \bigcap_{n \geq 2} BMO_n$ However, again since $f$ is not doubling, we can easily
    check that $\log \vert f \vert \notin BMO$. 

    The last two results follow from duality. Given
    Banach spaces $X, X_1, X_2, \ldots$, if 
    $\bigcap_n X_n^* \neq X^*$, then $\sum_n X_n \neq X$. 
    Likewise if $\sum_n X_n^* \neq X^*$, then $\bigcap_n X_n \neq X$.  These two facts can be used exactly as in \cite{ABMPZ} to give us the last two parts of Theorem \ref{app}.
\end{proof}

%\section*{Statements and Declarations}
%Funding: This work has been partially supported by the National Science Foundation.\\
%Competing interests: the authors have no competing interests to declare that are relevant to this project.\\
%Data: there is no associated data for this manuscript.


\begin{thebibliography}{ABCD99}

\bibitem{ABMPZ}  T.C. Anderson, E. Bellah, Z. Markman, T. Pollard and J. Zeitlin.  Arbitrary finite intersections of doubling measures and applications.  To appear in \emph{Journal of Functional Analysis}.

\bibitem{AH} T. C Anderson and B. Hu, A Structure Theorem on Intersections of General Doubling Measures and Its Applications, International Mathematics Research Notices, Volume 2023, Issue 9, May 2023, Pages 7423–7485.

\bibitem{ATV}  T.C. Anderson, C. Travesset, J. Veltri. The Structure of Weight and Function Classes With Coprime Bases. The Quarterly Journal of Mathematics (2021).

\bibitem{Sch}
James Ax. On Schanuel's Conjectures. \emph{Annals of Mathematics}, Second Series, Vol. 93, No. 2 (Mar., 1971), pp. 252-268 (17 pages). 

\bibitem{BMW} D.M Boylan, S.J. Mills, and L.A. Ward. Construction of an exotic measure: dyadic doubling and triadic doubling does not imply doubling. \emph{J. Math. Anal. Appl.} 476 (2019), no. 2, 241--277.

\bibitem{Cruz} D. Cruz-Uribe.  Function spaces, embeddings and extrapolation X, Paseky.  Matfyzpress, Charles University, 2017.

\bibitem{CN} D. Cruz-Uribe; C.J. Neugebauer. The structure of the reverse Hölder classes. Trans. Amer. Math. Soc. 347 (1995), no. 8, 2941--2960. 

\bibitem{CW} R. Coifman and G. Weiss.  Extensions of Hardy spaces and their use in analysis  Bull. Amer. Math. Soc. 83 (1977), no. 4, 569–645.

\bibitem{FS} C. Fefferman and E.M. Stein.  $H_p$ spaces of several variables.  Acta Math. 129(1972), no.3-4, 137–193.

\bibitem{GJ} J. B. Garnett and P. W. Jones. BMO from dyadic BMO. Pacific J. Math., 99(2):351–371, 1982.

\bibitem{G} F. W. Gehring,  The $L^p$ integrability of partial derivatives of a quasiconformal mapping, Acta. Math.130 (1973), 265–277

\bibitem{Grafakos} L. Grafakos.  Modern Fourier analysis, Third edition. Grad. Texts in Math., 250. Springer, New York, 2014.

\bibitem{Krantz} S.G. Krantz. On functions in $p$-adic BMO and the distribution of prime integers. \emph{J. Math. Anal. Appl.} 326 (2007), no. 2, 1437--1444.

\bibitem{LPW} J. Li, J. Pipher, L.A. Ward, Dyadic structure theorems for multiparameter function spaces, \emph{Rev. Mat. Iberoam}. {\bf 31} (2015), no. 3, 767--797.

\bibitem{P} C. Pereyra. Dyadic harmonic analysis and weighted inequalities: the sparse revolution.
In: Aldroubi A., Cabrelli C., Jaffard S., Molter U. (eds) New Trends in Applied Harmonic Analysis, Volume 2. Applied and Numerical Harmonic Analysis. Birkhauser, Cham (2019) 259--239. Available at arXiv:1812.00850v1.

\bibitem{PWX} J. Pipher, L.A. Ward, and X. Xiao. Geometric-arithmetic averaging of dyadic weights. Rev. Mat. Iberoam. 27 (2011), no. 3, 953--976.

\bibitem{S}
D. Sarason.  Functions of vanishing mean oscillation.  \emph{Trans. Amer. Math. Soc.} 207(1975), 391–405.

\bibitem{TM} T. Mei, $BMO$ is the intersection of two translates of dyadic $BMO$. \emph{C.R. Acad. Sci. Paris, Ser}. I {\bf 336} (2003), 1003--1006.

\bibitem{Ward} L.A. Ward.  Translation averages of dyadic weights are not always good weights.
Rev. Mat. Iberoamericana 18(2): 379-407 (June, 2002). 
\end{thebibliography}
\end{document}